\newcommand\cut{\setminus\!\setminus}
\newcommand\wh[1]{\scalebox{.9}{$\widehat{
#1
}$}}
\newcommand\Z{\mathbb{Z}}
\newcommand\R{\mathbb{R}}
\newcommand\bbm{\begin{bmatrix}}
\newcommand\ebm{\end{bmatrix}}
\newcommand\red[1]{\color{red}#1\color{black}}
\newcommand\FG[1]{\color{ForestGreen}#1\color{black}}
\newcommand\violet[1]{\color{Violet}#1\color{black}}
\newcommand\brown[1]{\color{Brown}#1\color{black}}
\newcommand\bs{\boldsymbol}
\theoremstyle{plain}
\newtheorem{theorem}{Theorem}[section]
\newtheorem{obs}[theorem]{Observation}
\newtheorem{prop}[theorem]{Proposition}
\newtheorem{cor}[theorem]{Corollary}
\newtheorem{correspondence}[theorem]{Correspondence}
\newtheorem{conjecture}[theorem]{Conjecture}
\newtheorem{fact}[theorem]{Fact}
\newtheorem*{T:ObviouslyPrimeEtc}{Theorem \ref{T:ObviouslyPrimeEtc}}
\newtheorem*{C:Diagrams}{Correspondence \ref{C:Diagrams}}
\newtheorem*{C:Links2}{Correspondence \ref{C:Links2}}
\newtheorem*{T:KupD}{Theorem \ref{T:KupD}}
\newtheorem*{T:VSplitEtc}{Theorem \ref{T:VSplitEtc}}
\newtheorem*{T:01}{Theorem \ref{T:01}}
\theoremstyle{definition}
\newtheorem{convention}[theorem]{Convention}
\newtheorem{construction}[theorem]{Construction}
\newtheorem{notation}[theorem]{Notation}
\newtheorem{definition}[theorem]{Definition}
\newtheorem{question}[theorem]{Question}
\newtheorem{problem}[theorem]{Problem}
\newtheorem{example}[theorem]{Example}
\newtheorem{move}{Move}
\theoremstyle{remark}
\newtheorem{rem}{Remark}
\begin{document}

\title[Prime virtual links]{Primeness of alternating virtual links}

\author{Thomas Kindred}

\address{Department of Mathematics \& Statistics, Wake Forest University \\
Winston-Salem North Carolina, 27109} 

\email{thomas.kindred@wfu.edu}
\urladdr{www.thomaskindred.com}



\maketitle

\begin{abstract}
Using a new tool called lassos, we establish a new correspondence between cellular link {diagrams} on closed surfaces and equivalence classes of virtual link {diagrams}. This is analogous to a well-known correspondence among the links represented by these diagrams, but with a crucial subtlety. 
We explain how, under these correspondences, the traditional notion of primeness for virtual links is stricter than the one for links in thickened surfaces. 
We extend a classical result of Menasco by proving 
that an alternating link in a thickened surface is prime in the stricter sense unless it is ``obviously" composite. (Adams et al and Howie--Purcell previously extended Menasco's result for the other notion of primeness.)  We describe, given an alternating virtual link diagram, how to determine 
by inspection whether the virtual link it represents is prime in either sense. 
\end{abstract}

\section{Introduction}\label{S:intro}


Traditionally, a nonclassical virtual link $K$ is said to be prime if 
it cannot be decomposed as a nontrivial connect sum. 
On the other hand, a nonstabilized\footnote{See \textsection\ref{S:Thick} for definitions of terms including {\it nonstabilized} and {\it cellular}.}  link $L$ in a thickened surface $\Sigma\times I$ of positive genus is traditionally called prime if, for any pairwise connect sum decomposition
   $(\Sigma\times I,L)=(\Sigma\times I,L_1)\#(S^3,L_2)$, $L_2$ is trivial. 
   
Interestingly, 
under a well-known correspondence between virtual links and nonstabilized links in thickened surfaces
, prime links in one setting do not always correspond to prime links in the other. For example, in Figure \ref{Fi:PrimesEx}, the knot $K$ in a thickened surface of genus 2 is prime, but
the corresponding virtual knot $L$ is not.
We show:

\begin{T:01}
Given a nonsplit virtual link $K$ and the corresponding nonstabilized link $L$ in a thickened surface $\Sigma\times I$:
\begin{enumerate}
\item $(\Sigma,L)$ is prime (in the traditional sense, which we call ``local'') if and only if $K$ admits no nontrivial connect sum decomposition $K=K_1\#K_2$ in which $K_1$ is a classical link and $g(K_2)=g(K)$; and
\item $K$ is prime if and only if $(\Sigma,L)$ is what we call ``pairwise prime'' (see Definition \ref{D:LPPrime}).
\end{enumerate}
\end{T:01}

The pairwise prime condition in part (2) is how Matveev {\it defines} prime virtual links \cite{mat}, but he does not 
mention (or prove) that this condition coincides with the natural definition in this paper's first sentence; part (2) of Theorem \ref{T:01} confirms the equivalence of these definitions.

\begin{figure}
\begin{center}
\hfill
\labellist\small
\pinlabel $K\!:$ at -50 150
\endlabellist
\includegraphics[width=.4\textwidth]{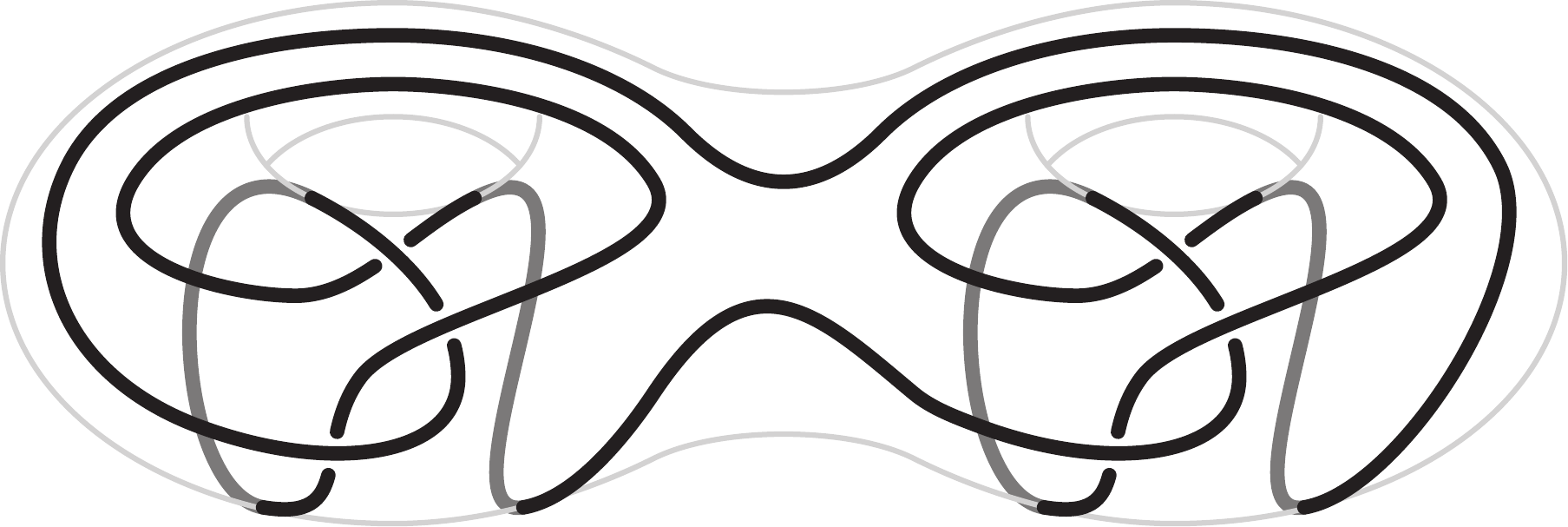}\hfill
\labellist\small
\pinlabel $L\!:$ at -50 150
\endlabellist
\includegraphics[width=.4\textwidth]{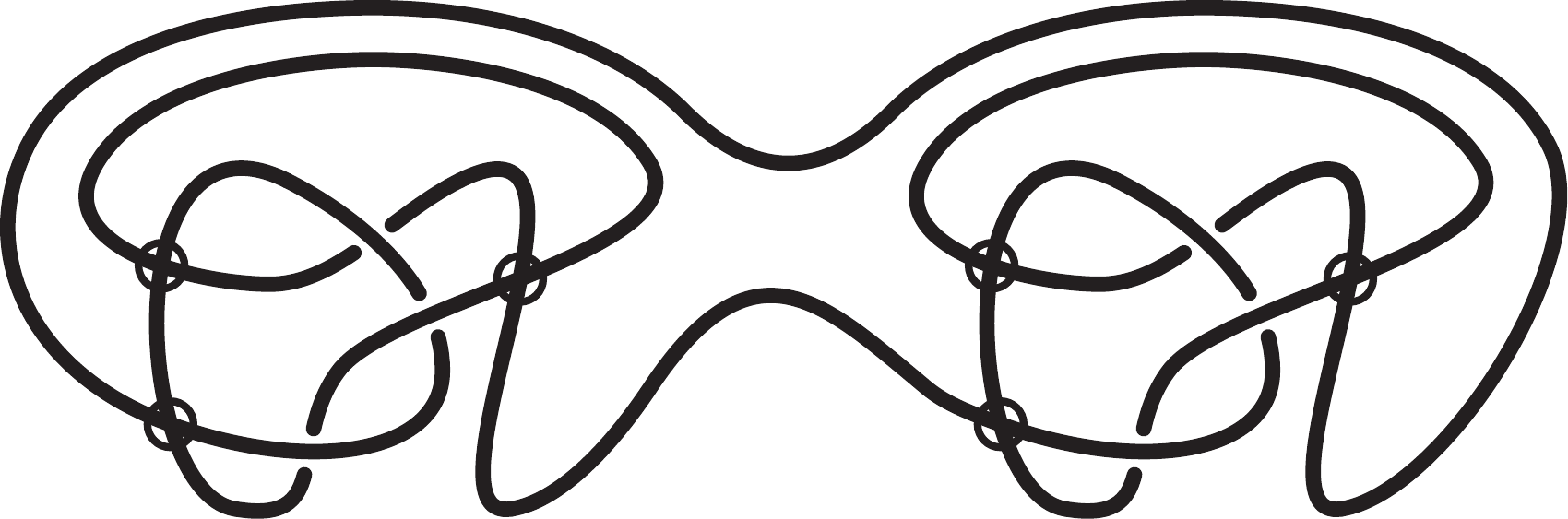}
\hfill
\caption{A knot that is locally, but not pairwise, prime.}
\label{Fi:PrimesEx}
\end{center}
\end{figure}   



Given a reduced cellular alternating diagram $D\subset\Sigma$ of a link $L\subset\Sigma\times I$, one can determine by inspecting $D$ whether or not $L$ is prime in either sense.  For local primeness, this was already known, by work of Menasco 
and Adams et al \cite{men84,adamsetal}, and Howie--Purcell proved a stronger theorem about local primeness in \cite{hp20}. We prove this for {\it pairwise} primeness:

\begin{T:ObviouslyPrimeEtc}
Given a reduced cellular
alternating diagram $D\subset \Sigma$ of a link $L\subset \Sigma\times I$, $(\Sigma,L)$ is pairwise
prime if and only if whenever $(\Sigma,D)=(\Sigma_1,D_1)\# (\Sigma_2,D_2)$, either $(\Sigma_1,D_1)$ or $(\Sigma_2,D_2)$ is $(S^2,\bigcirc)$.
\end{T:ObviouslyPrimeEtc}

 

To translate Theorem \ref{T:ObviouslyPrimeEtc} to a statement about virtual links and their diagrams, we use a well-known correspondence between virtual links and stable equivalence classes of links in thickened surfaces  \cite{kauff98,kaka,cks02}, together with a new correspondence between the associated {\it diagrams}:

\begin{C:Diagrams}
The following gives a bijection from equivalence classes $[V]$ of nonsplit virtual link diagrams under non-classical R-moves to cellular link diagrams on connected closed surfaces:

Choose $V\in[V]$, take a regular neighborhood $\nu V$ of $V$ in $S^2$, %
modify $\nu V$ near each virtual crossing of $V$ as shown in Figure \ref{Fi:VtoA},%
\footnote{At this intermediate stage, we have an {\it abstract link diagram}, which 
we will not need again.} 
and cap off each boundary component (abstractly) with a disk.
\end{C:Diagrams}

\begin{figure}
\begin{center}
\includegraphics[width=\textwidth]{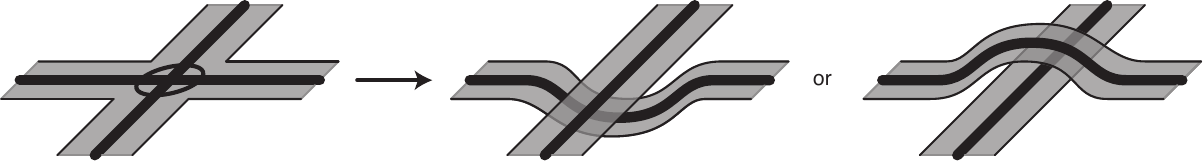}
\caption{Converting the neighborhood of a virtual link diagram to an abstract link diagram}
\label{Fi:VtoA}
\end{center}
\end{figure}

At first glance, this follows exactly the sort of construction described in \cite{kauff98,kaka,cks02}, but the reverse direction of the correspondence contains a hidden subtlety, one which is fundamental to understanding both correspondences and which, to the author's knowledge, has not previously been observed in the literature. We describe this subtlety in \textsection\ref{S:Subtle}.
%


Theorem \ref{T:ObviouslyPrimeEtc} and the related results of Adams et al \cite{adamsetal}, Howie--Purcell \cite{hp20}, and Menasco \cite{men84} state that an alternating link (in $S^3$ or a thickened surface) is composite (in an appropriate sense) 
if and only if it is ``obviously so'' in a given reduced alternating diagram. Theorem \ref{T:ObviouslyPrimeEtc} can be stated in the same manner.  When translating Theorem \ref{T:ObviouslyPrimeEtc} and the related results of Adams et al and Howie--Purcell to virtual link diagrams, however, ``obvious" feels inaccurate.  What does it mean for an alternating virtual link diagram $V$ to be ``obviously" composite (in either the local or pairwise sense)? 
Certainly, if $V$ decomposes as a diagrammatic connect sum of two nontrivial links, then it is obviously composite; but this is too restrictive (the theorem is untrue with such a strong requirement).  
Our remedy is (to consider not just $V$ but $[V]$ and) use a new tool called {\it lassos} to capture the salient features of $[V]$.  

Given a link diagram $D$ on a closed surface $\Sigma$, a lasso is a disk $X\subset \Sigma$ that contains all crossings of $D$. Similarly, given a virtual link diagram $V\subset S^2$, a lasso is a disk $X\subset S^2$ that contains all classical crossings of $V$ and no virtual ones. In both contexts, a lasso $X$ is {\it acceptable} if the part of the diagram in $X$ is connected and the part of the diagram outside $X$ does not admit an ``obvious" simplification. In \textsection\textsection\ref{S:lassoSigma}-\ref{S:lassoV}, we define these terms carefully and establish basic properties, like the fact that $D\subset\Sigma$ admits has an acceptable lasso if and only if $D$ is connected. In \textsection\ref{S:LD}, we introduce {\it lasso diagrams} and {\it lasso numbers}.  A lasso diagram is essentially a virtual link diagram in which the virtual crossings are captured combinatorially rather than shown  (see Figure \ref{Fi:Table}).  The lasso number is an invariant of virtual links.  We establish some basic properties.  Computing this invariant seems like a challenging, but approachable, problem.

In \textsection\ref{S:Corr}, we use lassos to establish Correspondence \ref{C:Diagrams}, which we then extend to equivalence classes of lasso diagrams under two types of moves (see Moves \ref{M:LDMove1}-\ref{M:LDMove2} and Figure \ref{Fi:Moves}). This gives the following extension of the well-known Correspondence \ref{C:Links}:

\begin{C:Links2}\label{C:Links2}
There is a triple bijective correspondence between (i) virtual links, (ii) stable equivalence classes of links in thickened surfaces, and (iii) equivalence classes of lasso diagrams under Moves \ref{M:LDMove1}-\ref{M:LDMove2} and classical R-moves.  
\end{C:Links2}

We also prove the following diagrammatic extension of Kuperberg's theorem:

\begin{T:KupD}
All minimal genus diagrams of a nonsplit virtual link are related by minimal-genus-preserving generalized R-moves.
\end{T:KupD}

Section \ref{S:ThickPrime} addresses local and pairwise primeness for links in thickened surfaces, culminating with Theorem \ref{T:ObviouslyPrimeEtc}, and \textsection\ref{S:Virtual} adapts that theorem to virtual link diagrams and lasso diagrams. Here, we find that nugatory crossings are always obvious in acceptable lasso diagrams, even though nugatory crossings can be harder to identify in virtual link diagrams.  This is particularly important because, in the virtual setting there are two types of nugatory crossings (removable and non-removable), and such crossings provide the main technical obstacle to translating statements about diagrammatic primeness (local or pairwise) to statements about prime virtual links.  Everything connects in our final result:

\begin{T:VSplitEtc}
Let $V'$ be a connected alternating 
diagram of a virtual link $K$, and consider a diagram $V\in[V']$ which 
admits an acceptable lasso $X$. Assume that $V$ has at least one virtual crossing. Then:
\begin{enumerate}[label=(\arabic*)]
\item When $V'$ has no nugatory crossings, $K$ is prime if and only if $V'$ is prime.
\item When $V'$ has no removably nugatory crossings, $K$ is locally prime if and only if $V'$ is locally prime.
\end{enumerate}
\end{T:VSplitEtc}

Section \ref{S:Conclude} offers some concluding thoughts.

\section{Background}

\subsection{Notation}\label{S:Notation}
Throughout:
\begin{itemize}
\item We work in the piecewise-linear category.
\item $\Sigma$ denotes a closed orientable surface, not necessarily connected or of positive genus. 
\item $I$ and $I_+$ respectively denote the intervals $[-1,1]$ and $[0,1]$.
\item In $\Sigma\times I$, we identify $\Sigma$ with $\Sigma\times\{0\}$ and write $\Sigma\times\{\pm1\}=\Sigma_\pm$.  
\item $\pi_\Sigma$ denotes projection $\pi_\Sigma:\Sigma\times I\to \Sigma$.
\item 
For a pair $(\Sigma,L)$, $L$ is a link in $\Sigma\times I$ which intersects each component of $\Sigma\times I$.
\item For a pair $(\Sigma,D)$, $D$ is a link diagram on $\Sigma$ which intersects each component of $\Sigma$. 
%
\item 
$\wh{S^3}$ denotes $S^3\setminus(\text{2 points})$, which is identified homeomorphically with $S^2\times \R$, and $\pi:\wh{S^3}\to S^2$ denotes projection.
\item 
$|X|$ denotes the number of connected components of $X$.
\item Given transverse submanifolds $S,T$ of some ambient manifold, the notations $|S\cap T|$ and $|S\pitchfork T|$ carry the same meaning; we use the latter notation if we wish to emphasize or clarify that $S$ and $T$ are transverse. 
\item In a manifold $X$, given a subset $Y$ that has a closed regular neighborhood, we denote this neighborhood $\nu Y$ and its interior $\overset{\circ}{\nu}Y$. Further, $X\cut Y$ denotes ``$X$ cut along $Y$," which is the metric closure of $X\setminus Y$.\footnote{$X\cut Y$ is homeomorphic to $X\setminus\overset{\circ}{\nu}Y$ but may have extra structure from $Y$ encoded in its boundary.  For example, if $F$ is a compact orientable surface in $S^3$, then $S^3\cut F$ is a sutured manifold: the extra structure here is the copy of $\partial F$ on $\partial(S^3\cut F)$, which cuts $\partial(S^3\cut F)$ into two copies of $F$. Similarly, if $(\Sigma,D)$ is cellular, then the boundary of each disk of $\Sigma\cut D$ contains a copy of each incident edge and vertex (i.e. crossing) from $D$.}
\end{itemize}

\subsection{Links in thickened surfaces}\label{S:Thick}

A pair $(\Sigma,L)$ is {\bf stabilized} if, for some circle%
\footnote{We use ``circle" as shorthand for ``smooth simple closed curve."} 
 $\gamma\subset \Sigma$, $L$ can be isotoped so that it is disjoint from the annulus $\gamma\times I$ but intersects each component of $(\Sigma\times I)\setminus(\gamma\times I)$; one can then {\it destabilize} the pair $(\Sigma,L)$ by cutting $\Sigma\times I$ along $\gamma\times I$ and attaching two 3-dimensional 2-handles in the natural way (this may disconnect $\Sigma$); the reverse operation is called {\it stabilization}. Note conversely that $(\Sigma,L)$ is {\it nonstabilized} if and only if every diagram $D$ of $L$ on $\Sigma$ is {\it cellular}, meaning that $D$ cuts $\Sigma$ into disks.

\begin{convention}\label{Conv:EquivD}
We regard two pairs $(\Sigma,L)$ and $(\Sigma',L')$ as equivalent if there is a pairwise homeomorphism $h:(\Sigma\times I,L)\to(\Sigma'\times I,L')$ under which $\Sigma_+\to\Sigma'_+$, respecting orientations. We regard two pairs $(\Sigma,D)$ and $(\Sigma',D')$ as equivalent if there is a pairwise homeomorphism $(\Sigma,D)\to(\Sigma',D')$  in which $\Sigma\to \Sigma'$ respects orientations  and  $D\to D'$ respects crossing information.  
\end{convention}

\begin{theorem}[Theorem 1 of \cite{kup03}]\label{T:Kup}
The stable equivalence class of any pair $(\Sigma,L)$ contains a unique nonstabilized representative.
\end{theorem}

We call a pair $(\Sigma,L)$ {\bf split} if $L$ has a disconnected diagram on $\Sigma$.  In particular, $(\Sigma,L)$ is split whenever $\Sigma$ is disconnected.  Equivalently, $(\Sigma,L)$ is split if, for some (possibly empty) disjoint union of circles $\gamma\subset \Sigma$, $\Sigma\setminus \gamma$ is disconnected, and $L$ can be isotoped so that it is disjoint from $\gamma\times I$  but intersects each component of $(\Sigma\times I)\setminus(\gamma\times I)$.

Kuperberg's theorem implies that when $(\Sigma,L)$ is nonsplit, $(\Sigma,L)$ is nonstabilized if and only if $\Sigma$ has {\it minimal genus} in its stable equivalence class.  
Note that when $\Sigma$ is connected, if $(\Sigma,L)$ is split, then it is also stabilized. The converse is false. In fact, by Kuperberg's theorem, the number of split components is an invariant of stable equivalence classes.

If $L$ is nonsplit and $g(\Sigma)>0$, then $(\Sigma\times I)\setminus L$ is irreducible, as $\Sigma\times I$ is always irreducible, since its universal cover is $\R^2\times \R$ \cite{csw14}.
The converse of this, too, is false. Indeed, if $(\Sigma_i\times I,L_i)$ is nonsplit and $g(\Sigma_i)>0$ for $i=1,2$, then construct $(\Sigma_1\#\Sigma_2,L_1\sqcup L_2)$ in the natural way.  This is split and yet, by a standard innermost circle argument, irreducible.

We will use the following result of Boden--Karimi and our subsequent generalization. 

\begin{fact}[Corollary 3.6 of \cite{bk20}]\label{F:bk36}
If $\Sigma$ is connected and $(\Sigma,L)$ has a cellular alternating diagram $D\subset\Sigma$, then $(\Sigma,L)$ is nonsplit and nonstabilized.
\end{fact}

\begin{cor}\label{C:bk36++}
Suppose $(\Sigma,L)$ is represented by an alternating diagram $D\subset\Sigma$. Then $(\Sigma,L)$ is nonsplit if and only if $D$ is connected, and $(\Sigma,L)$ is nonstabilized if and only if $D$ is cellular.
\end{cor}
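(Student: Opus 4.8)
The plan is to bootstrap Corollary \ref{C:bk36++} from the Boden--Karimi result (Corollary 3.6 of \cite{bk20}) by reducing the general alternating case to the fully alternating case via split- and stabilization-decompositions, and then arguing in each direction. Throughout, recall that $D$ alternating means $D$ is alternating on each component of $\Sigma$, ``connected'' means $D$ (together with all regions it touches) forms a connected subset of $\Sigma$ meeting every component, and ``cellularly embedded'' means the complementary regions of $D$ in $\Sigma$ are all disks.

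First I would dispose of the easy implications. If $D$ is disconnected, then $D$ visibly decomposes as a disjoint union on $\Sigma$, exhibiting $(\Sigma,L)$ as a split pair, so nonsplit $\Rightarrow$ $D$ connected is immediate (contrapositive). Similarly, if $D$ is not cellularly embedded, then some complementary region $R$ is not a disk, so $R$ contains an essential circle $\gamma\subset\Sigma$; since $\gamma$ is disjoint from $D$ and hence from $L$, and one checks that $L$ meets both sides after cutting along $\gamma\times I$ (using that $D$ meets every component of $\Sigma$), the pair $(\Sigma,L)$ is stabilized. Hence nonstabilized $\Rightarrow$ $D$ cellularly embedded. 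The content is the two converse directions.

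For the converses, suppose $D$ is connected (resp. cellularly embedded). The idea is to find a \emph{fully} alternating diagram in the picture. If $D$ is connected but not cellularly embedded, compress $\Sigma\times I$ along the non-disk complementary regions: cutting $\Sigma$ along a maximal collection of disjoint essential circles lying in complementary regions of $D$ yields a (possibly disconnected) surface $\Sigma^\ast$ carrying a diagram $D^\ast$, still alternating, now cellularly embedded on each component. Because $D$ is connected and meets every component of $\Sigma$, one can arrange that each component of $\Sigma^\ast$ still carries a nonempty piece of $D^\ast$, and $D^\ast$ connected forces $\Sigma^\ast$ to be connected (here is where connectedness of $D$ is really used). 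Then Corollary 3.6 of \cite{bk20} applies to $(\Sigma^\ast, D^\ast)$, giving that $(\Sigma^\ast, L^\ast)$ is nonsplit; since $(\Sigma,L)$ is obtained from $(\Sigma^\ast,L^\ast)$ by stabilizations, which do not change the number of split components (stated in \textsection\ref{S:primes}: ``the number of split components is an invariant of stable equivalence classes''), $(\Sigma,L)$ is nonsplit as well. If in addition $D$ is cellularly embedded, then no compression is needed, $\Sigma^\ast = \Sigma$, $D^\ast = D$ is fully alternating, and Corollary 3.6 of \cite{bk20} directly yields that $(\Sigma,L)$ is nonstabilized (and nonsplit).

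The main obstacle I anticipate is the bookkeeping in the compression step: verifying that cutting along essential circles in the complementary regions genuinely produces a \emph{fully alternating} diagram on a surface where Corollary 3.6 of \cite{bk20} is applicable, and in particular that connectedness of the diagram $D$ survives and controls connectedness of the compressed surface $\Sigma^\ast$. One has to be careful that distinct complementary regions can be pinched together, that a single region may require several compressing circles, and that after compression each new component still meets the diagram; this is exactly the ``innermost circle'' style argument, but it needs to be set up so that the alternating property is preserved region-by-region and so that ``$D$ connected'' is not lost. Once that reduction is clean, the rest is a direct appeal to Corollary 3.6 of \cite{bk20} together with the stated invariance of the split-component count under stabilization.
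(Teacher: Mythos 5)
Your proposal is correct and follows what the paper intends: the paper states this corollary with no proof at all, treating it as immediate from Corollary 3.6 of \cite{bk20}, the definitions of split/stabilized, and the stated invariance of the number of split components under stabilization, which is exactly the reduction you carry out (destabilize along circles in non-disk regions until the diagram is fully alternating on each component, then apply Boden--Karimi). One small slip in your ``easy'' direction: a circle $\gamma$ essential in a non-disk region need not witness stabilization directly, since $\gamma$ may separate $\Sigma$ with all of $D$ on one side (so $L$ does \emph{not} meet both complementary components); in that case the empty side has positive genus and one replaces $\gamma$ by a non-separating circle there, whose complement is connected and meets $L$. The same care is needed when choosing your maximal family of destabilizing circles, but this is exactly the bookkeeping you already flag.
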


\begin{proof}
If $D$ is disconnected, then $(\Sigma,L)$ is split. If $D$ is not cellular, then $(\Sigma,L)$ is stabilized.  If $D$ is cellular, then, by Fact \label{F:bk36}, each component of $(\Sigma,L)$ is nonstabilized, so $(\Sigma,L)$ is nonstabilized.  

Finally, assume that $D$ is connected and alternating.  If $D$ is cellular, then $(\Sigma,L)$ is nonsplit by Fact \label{F:bk36}. Otherwise, there exists a circle $\gamma\subset \Sigma\setminus D$ which is essential on $\Sigma$.  Destabilizing $(\Sigma,L)$ along $\gamma\times I$ preserves the fact that $D$ is connected and alternating and eventually makes $D$ cellular.  Now Fact \label{F:bk36} implies that $D$ represents a nonsplit link, and, again, the number of split components is an invariant of stable equivalence classes.
\end{proof}

\subsection{Virtual link diagrams}\label{S:Virtual}

A {\it virtual link diagram} is the image of an immersion $\bigsqcup S^1\to S^2$ in which all self-intersections are transverse double-points, some  labeled with over-under information. These labeled double-points are called {\it classical crossings}, and the other double-points are called {\it virtual crossings}. Traditionally, virtual crossings are marked with a circle, as in Figure \ref{Fi:RMoves}.  
A {\it virtual link} is an equivalence class of virtual diagrams under generalized Reidemeister moves (R-moves), of which there are seven types, three {\it classical} and four {\it non-classical}, shown up to symmetry in Figure \ref{Fi:RMoves}. 

\begin{figure}
\begin{center}
\includegraphics[width=\textwidth]{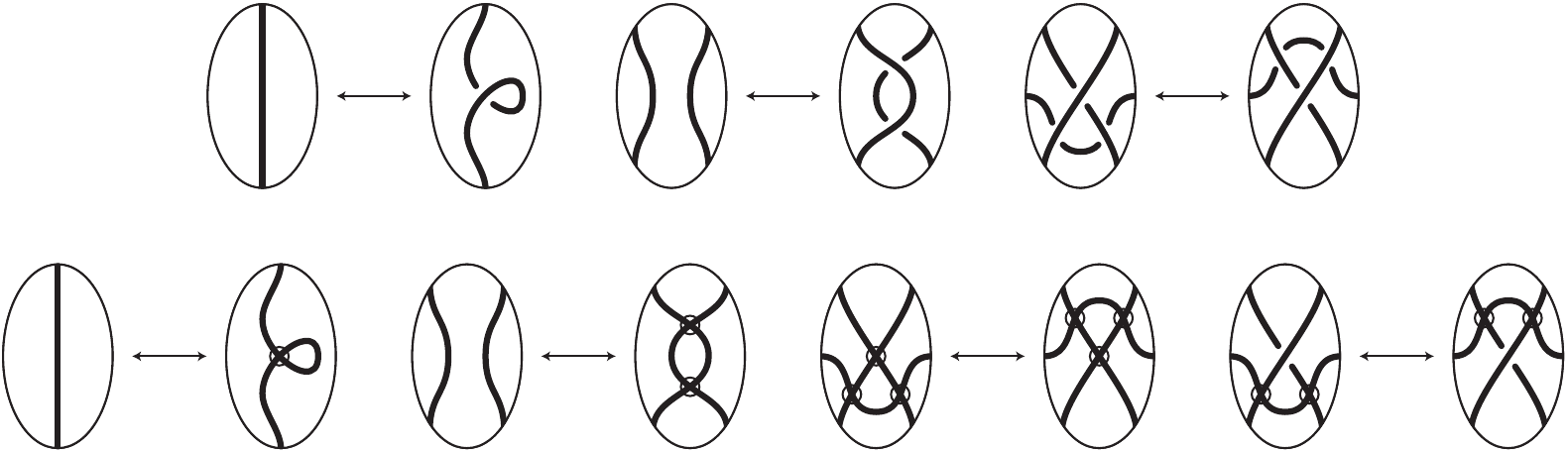}
\caption{Classical (top) and virtual (bottom) Reidemeister moves}
\label{Fi:RMoves}
\end{center}
\end{figure}

\begin{convention}\label{Conv:EquivV}
We regard two virtual link diagrams $V,V'\subset S^2$ as equivalent if they are related by planar isotopy.%
\footnote{Or equivalently if there is a pairwise homeomorphism $(S^2,V)\to(S^2,V')$  in which $S^2\to S^2$ respects orientations  and  $V\to V'$ respects crossing information.}
\end{convention}

 \begin{notation}\label{N:[V]}
Given a virtual link diagram $V$, let $[V]$ denote the set of all diagrams related to $V$ by {\it non-classical} R-moves. 
\end{notation}

We call a virtual link {\bf split} if it has a disconnected diagram, and we call a virtual link diagram $V$ {\bf split} if some $V'\in [V]$ is disconnected. We call $V$ {\bf alternating} if its classical crossings alternate between over- and under-crossings (ignore virtual crossings by passing straight through them). If a non-split virtual link $K$ corresponds to a non-stabilized pair $(\Sigma,L)$, then the genus of $\Sigma$ is called the genus of $K$, written $g(K)$.


\section{Lassos, lasso diagrams, and lasso numbers}\label{S:Lassos}

We introduce lassos for pairs $(\Sigma,D)$ and then for virtual link diagrams. In both contexts, a lasso is a disk that captures the salient features of the diagram.

\subsection{Lassos of link diagrams on closed surfaces}\label{S:lassoSigma}


\begin{definition}
A {\bf lasso} for $(\Sigma,D)$ is a disk $X\subset \Sigma$ that intersects $D$ generically and contains all crossings of $D$; $X$ is {\bf acceptable} if:
\begin{itemize}
\item $D\cap X$ is connected, and
\item $D\cut X$ is comprised of arcs, none parallel in $\Sigma\cut X$ to $\partial X$.
\end{itemize}
\end{definition}


\begin{prop}\label{P:LassoSD}
A pair $(\Sigma,D)$ admits an acceptable lasso if and only if $D$ is connected.  
\end{prop}


\begin{proof}
If $D$ is disconnected and $X$ is a lasso for $(\Sigma,D)$ such that $D\cap X$ is connected, then $D$ must have a component 
 without crossings which lies entirely outside $X$, so $X$ cannot be acceptable.
For the converse, let $G$ be the underlying graph of $D$.%
\footnote{Obtain $G$ from $D$ by replacing each crossing with an unlabeled 4-valent vertex.} 
Take a spanning tree $T$ of $G$ and consider its regular neighborhood $\nu T\subset\Sigma$. This is a lasso for $(\Sigma,D)$. If any arc of $D\cut X$ is parallel in $\Sigma\cut X$ to $\partial X$, isotope $\partial X$ past that arc; note that $D\cap X$ remains connected.  Repeat until $X$ is acceptable.
\end{proof}

Proposition \ref{P:LassoSD} and Corollary \ref{C:bk36++} imply:

\begin{cor}\label{C:LassoAltSD}
An alternating diagram $D$ on a surface $\Sigma$ is connected, and thus represents a nonsplit link, if and only if $(\Sigma,D)$ admits an acceptable lasso.
\end{cor}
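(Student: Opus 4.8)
The plan is to prove Corollary \ref{C:LassoAltSD} by assembling the two ingredients that precede it, namely Corollary \ref{C:bk36++} (an alternating diagram $D$ on $\Sigma$ represents a nonsplit link iff $D$ is connected) and Proposition \ref{P:LassoSD} (if $D$ is connected then $(\Sigma,D)$ admits an acceptable lasso), together with the easy converse that the existence of any lasso forces $D$ to be connected. So the three-way equivalence I want is: $D$ connected $\iff$ $L$ nonsplit $\iff$ $(\Sigma,D)$ admits an acceptable lasso.

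The forward chain is immediate. If $D$ is connected, then by Corollary \ref{C:bk36++} the link $L\subset\Sigma\times I$ it represents is nonsplit (this is where alternatingness is used), and by Proposition \ref{P:LassoSD} the pair $(\Sigma,D)$ admits an acceptable lasso. The only thing left is to close the loop: suppose $(\Sigma,D)$ admits an acceptable lasso $X$; I must deduce that $D$ is connected. Here is the argument. A lasso $X$ is a disk containing all crossings of $D$, so $D\setminus\setminus X$ is a disjoint union of arcs properly embedded in $\Sigma\setminus\setminus X$, each with both endpoints on $\partial X$; in particular $D\cap X$ already contains every crossing and hence every vertex of the underlying graph of $D$, and acceptability says $D\cap X$ is connected. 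Now any component of $D$ meets $X$ (it has a crossing, or if it is a crossingless circle it still must meet $X$ — I should double-check this degenerate case, see below), and since each arc of $D\setminus\setminus X$ joins two points of the connected set $D\cap X\subset X$, the whole of $D$ is connected. Thus $D$ is connected, and then Corollary \ref{C:bk36++} gives that $L$ is nonsplit.

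I should be careful about the degenerate situation where $D$ has a crossingless split circle, i.e. a component of $D$ that is an embedded circle disjoint from $X$. Such a component would show that $(\Sigma,D)$ is \emph{not} nonsplit, so for consistency the definition of lasso must already rule this out (indeed, a crossingless circle disjoint from $X$ is an arc of $D\setminus\setminus X$ — in fact a closed component — parallel in $\Sigma\setminus\setminus X$ to $\partial X$ only if it bounds a disk disjoint from $X$, but a nontrivial such circle need not; so strictly one should note that a lasso must contain all of a crossingless circle's... ). The cleanest fix is to invoke that a lasso $X$ contains all crossings, so a crossingless component $c$ of $D$ lies entirely in $\Sigma\setminus\setminus X$, and if $D\cap X$ is nonempty and connected while $c$ is a separate circle, then $D$ is disconnected, contradicting nothing yet — but then $L$ is split, and one checks this cannot coexist with an \emph{acceptable} lasso because... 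Rather than belabor this, I will simply phrase the converse as: an acceptable lasso exists $\Rightarrow$ $D\cap X$ is connected and contains all crossings and at least one point of every component (using that $D$ intersects every component of $\Sigma$ and $X\subset\Sigma$ with $D\cap X$ connected and nonempty; the case of an entirely crossingless $D$ on $S^2$, the trivial knot, is handled directly since then $D$ is already connected), whence $D$ is connected.

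The main obstacle, then, is not conceptual but bookkeeping: making the statement of the converse airtight across the degenerate cases (crossingless diagrams, split circles, multi-component $\Sigma$), and confirming that the definition of \emph{lasso} as given — a single disk $X$ containing \emph{all} crossings — already implicitly forces $\Sigma$ connected and $D$ to meet $X$. Once those edge cases are pinned down, the proof is two lines: acceptable lasso $\Rightarrow$ $D$ connected $\Rightarrow$ (Corollary \ref{C:bk36++}) $L$ nonsplit, and conversely $L$ nonsplit $\Rightarrow$ (Corollary \ref{C:bk36++}) $D$ connected $\Rightarrow$ (Proposition \ref{P:LassoSD}) acceptable lasso exists.
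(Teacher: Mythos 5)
Your proof is correct and takes essentially the same route as the paper, which derives the corollary in one line from Proposition \ref{P:LassoSD} (connected $\Rightarrow$ acceptable lasso) and Corollary \ref{C:bk36++} (connected $\iff$ nonsplit, for alternating $D$). The converse direction that you spell out (acceptable lasso $\Rightarrow$ $D$ connected), including your worry about crossingless components disjoint from $X$, is left implicit in the paper, so your extra bookkeeping is a refinement rather than a divergence.
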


Note that $(\Sigma,D)$ cannot admit any lasso whatsoever if $\Sigma$ is disconnected.  This motivates:

\begin{definition}
Let $D_1,\hdots, D_n$ be the components of a link diagram $D\subset\Sigma$, and for each $i$ let $\Sigma_i$ be the component of $\Sigma$ that contains $D_i$.  A {\bf lasso system} for $(\Sigma,D)$ is a collection of disks $X_1,\hdots,X_n$ such that each $X_i$ is a lasso for $(\Sigma_i,D_i)$; $X$ is {\bf acceptable} if each $X_i$ is acceptable.
\end{definition}

Proposition \ref{P:LassoSD} and Corollary \ref{C:LassoAltSD} imply:

\begin{cor}\label{C:System}
Every pair $(\Sigma,D)$ admits an acceptable lasso system $X$, and if $D$ is alternating then $|X|=|D|$.
\end{cor}

\subsection{Lassos for virtual link diagrams}\label{S:lassoV}

\begin{definition}
A {\bf lasso} for a virtual link diagram $V\subset S^2$ is a disk $X\subset S^2$ that intersects $V$ generically and contains all classical crossings of $V$ but no virtual crossings. Call $V\cap X$ and $V\cut X$ respectively the {\bf classical} and {\bf virtual} parts of $V$ (with respect to $X$).  Say that $X$ is {\bf acceptable} if:
\begin{itemize}
\item the classical part of $V$ is connected and
\item the virtual part of $V$ is comprised of arcs, none of which:
\begin{itemize}
\item is parallel in $S^2\cut(V\cup X)$ to $\partial X$,
\item self-intersects, nor 
\item intersects any other arc more than once.  
\end{itemize}
\end{itemize}
\end{definition}

Whereas not every pair $(\Sigma,D)$ admits a lasso, every virtual diagram does:

\begin{prop}\label{P:VLasso}
Every virtual link diagram $V$ admits a lasso.
\end{prop}

\begin{proof}
Construct a planar graph $G\subset S^2$ as follows.  Place one vertex at each classical crossing of $V$ and one in the interior of each component of $S^2\cut V$.  Each classical crossing $c$ of $V$ lies on the boundary of three or four components of $S^2\cut V$; construct an edge from $c$ to the vertex in each of these components.  Each edge $e$ of $V$ is incident to two components of $S^2\cut V$; construct an edge $\alpha$ between the vertices in these components, such that $\alpha\cap V$ consists of a single point in the interior of $e$.  Now choose a spanning tree $T$ for $G$, and take a regular neighborhood $\nu T$ of $T$ in $S^2$.  The disk $\nu T\subset S^2$ intersects $V$ generically and contains all classical crossings in $V$ but no virtual crossings. Thus, $\nu T$ is a lasso for $V$ (not necessarily acceptable).  
\end{proof}

Further:

\begin{prop}\label{P:VLassoAcc}
Given a nonsplit virtual link diagram $V$, some $V^*\in [V]$ admits an acceptable lasso.
\end{prop}

\begin{proof}
By Proposition \ref{P:VLasso}, $V$ has a lasso, $X$. If $X\cap V$ is disconnected, then there is a properly embedded arc $\alpha\subset X$ disjoint from $V$ such that $V$ intersects both disks of $X\cut\alpha$.  Take a regular neighborhood $\nu\alpha\subset X$ disjoint from $V$. Some arc $\beta$ in the virtual part of $V$ has one endpoint on each disk of $X\cut\nu\alpha$, or else $V$ would be split.  While fixing $V\cap X$ and $\beta$, change $V\to V'$ by non-classical R-moves so that all virtual crossings of $V'$ lie outside the disk $X'=(X\cut\nu\alpha)\cup\nu\beta$. (This may require making $V'\cap\nu\alpha\neq\varnothing$.) Now $X'$ is a lasso for $V'$, and $|X'\cap V'|=|X\cap V|-1$.  Repeating this process eventually yields a lasso $X''$ for some $V''\in[V]$ such that $X''\cap V''$ is connected and $V''\cut X''$ is comprised of arcs. 

If any arc of $V''\cut X''$ is parallel in $S^2\cut (V''\cup X'')$ to $\partial X''$, isotope $\partial X''$ past that arc (note that $V''\cap X''$ remains connected); if some arc of $V''\cut X''$ self-intersects, or if two such arcs intersect more than once, perform non-classical R-moves to remove such intersections.  Repeat. Eventually, this yields an acceptable lasso $X^*$ for some $V^*\in[V]$.
\end{proof}

It is again natural to define:

\begin{definition}
Let $V_1,\hdots, V_n$ be the components of a virtual link diagram $V$.  A {\bf lasso system} for $V$ is a collection of disks $X_1,\hdots,X_n$ such that each $X_i$ is a lasso for $V_i$; $X$ is {\bf acceptable} if each $X_i$ is acceptable.
\end{definition}

Proposition \ref{P:VLassoAcc} implies:

\begin{cor}\label{C:VSystem}
Given any virtual link diagram $V$ some $V^*\in [V]$ admits an acceptable lasso system.
\end{cor}

\subsection{Lasso diagrams and lasso numbers}\label{S:LD}
 
 \begin{figure}
 \begin{center}
 \labellist\tiny
 \pinlabel {1} at 0 80
 \pinlabel {2} at 0 220
 \pinlabel {2} at 230 80
 \pinlabel {1} at 230 220
 \pinlabel {1} at 310 40
 \pinlabel {3} at 310 280
 \pinlabel {3} at 405 -15
 \pinlabel {4} at 500 40
 \pinlabel {2} at 500 280
 \pinlabel {1} at 405 320
 \pinlabel {4} at 527 180
 \pinlabel {2} at 527 124
  \pinlabel {2} at 580 80
 \pinlabel {1} at 805 80
  \pinlabel {1} at 580 220
 \pinlabel {2} at 805 220
  \pinlabel {1} at 890 40
 \pinlabel {3} at 890 280
 \pinlabel {3} at 985 -15
 \pinlabel {4} at 1080 40
 \pinlabel {2} at 1080 280
 \pinlabel {1} at 985 320
  \pinlabel {4} at 1107 180
 \pinlabel {2} at 1107 124
  \pinlabel {1} at 1180 40
 \pinlabel {3} at 1180 280
 \pinlabel {3} at 1265 -15
 \pinlabel {4} at 1365 40
 \pinlabel {2} at 1360 280
 \pinlabel {1} at 1265 320
  \pinlabel {4} at 1397 180
 \pinlabel {2} at 1397 124
  \pinlabel {1} at 1440 110
 \pinlabel {2} at 1435 170
 \pinlabel {3} at 1510 0
 \pinlabel {2} at 1670 70
 \pinlabel {1} at 1685 180
 \pinlabel {3} at 1565 320
  \pinlabel {1} at 1730 110
 \pinlabel {2} at 1725 170
 \pinlabel {3} at 1800 0
 \pinlabel {2} at 1960 70
 \pinlabel {1} at 1975 180
 \pinlabel {3} at 1855 320   \endlabellist
 \includegraphics[width=\textwidth]{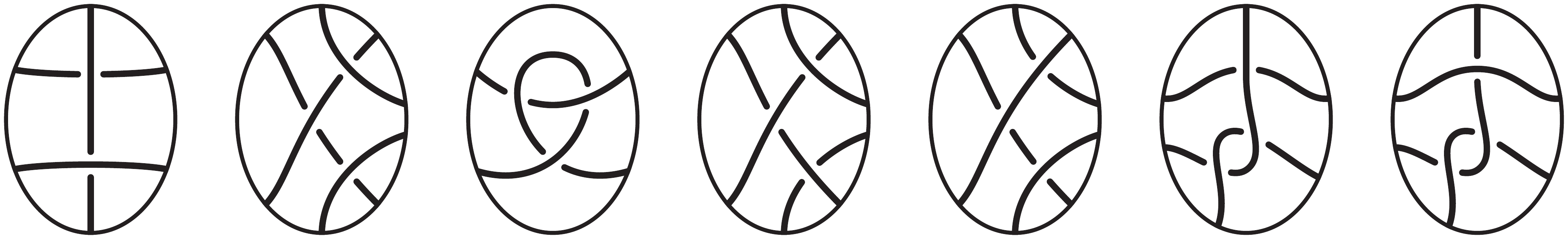}
 \caption{Lasso diagrams of the first seven non-classical virtual knots}
 \label{Fi:Table}
 \end{center}
 \end{figure}
 
 Suppose $X$ is a lasso for a nonsplit virtual link diagram $V$.  Take the classical part of $V$ and label its endpoints in pairs following the virtual part of $V$.  We call $(X,V\cap X)$, with labels on $\partial X$, a {\bf lasso diagram} from $V$.  A lasso $Y$ for a pair $(\Sigma,D)$ likewise gives a lasso diagram $(Y,D\cap Y)$. Figure \ref{Fi:Table} shows lasso diagrams of the first seven non-classical virtual knots \cite{green}.
 
 We regard two lasso diagrams $(X,V)$ and $(X',V')$ as equivalent if there is a pairwise homeomorphism $(X,V)\to (X',V')$ such that $
 V\to V'$, $X\to X'$, and $\partial X\to \partial X'$ respectively preserve crossing information, orientations, and pairings between labels.
 
 Given a lasso diagram $(X,V\cap X)$, it is easy to construct a virtual link diagram $V'$ for which $X$ is a lasso, hence $V'\cap X=V\cap X$, and thus $[V']=[V]$, although the virtual parts of $V$ and $V'$ need not match. Call $(X,V\cap X)$ {\bf acceptable} if $V\cap X$ is connected and no label appears twice consecutively on $\partial X$.  
 
 \begin{obs}
  Suppose $X$ is a lasso for a nonsplit virtual link diagram $V$. The lasso diagram $(X,V\cap X)$ is acceptable if and only if $X$ is an acceptable lasso for some $V'\in[V]$ with $V'\cap X=V\cap X$.
 \end{obs}
 
 \begin{definition}\label{D:LN}
Given a diagram $D\subset \Sigma$ of a link $L\subset\Sigma\times I$, denote the set of all lassos for $(\Sigma,D)$ by $\text{lassos}(\Sigma,D)$.  Define the {\bf lasso number} of $(\Sigma,D)$ to be
\[\text{lasso}(\Sigma,D)=\min_{X\in\text{lassos}(\Sigma,D)}|\partial X\cap D|.\]
Define the {\bf lasso number} of $(\Sigma,L)$ to be
\[\text{lasso}(\Sigma,L)=\min_{\text{diagrams }D\text{ of }L}\text{lasso}(\Sigma,D).\]

Say that a lasso $X$ for a diagram $(\Sigma,D)$ of $L\subset\Sigma\times I$ is {\it efficient for $D$}, or $D$-\emph{efficient}, if $|\partial X\cap D|=\text{lasso}(\Sigma,D)$. Say that $X$  is {\it efficient for $L$}, or $L$-\emph{efficient}, if $|\partial X\cap D|=\text{lasso}(\Sigma,L)$. 
\end{definition}

Note that, for any diagram $D$ of any nonsplit pair $(\Sigma,L)$, every $D$-efficient lasso is acceptable.  In general, computing lasso numbers of diagrams (let alone links!) may appear difficult. In practice, however, computing lasso numbers of diagrams is sometimes straightforward, due to the following fact:

\begin{prop}
Consider a pair $(\Sigma,D)$, where $\Sigma$ has genus $g$.  Then every acceptable lasso $X$ for $(\Sigma,D)$ satisfies
\[\frac{1}{2}|\partial X\cap D|=2g-1+|\Sigma\cut D|-n,\]
where $n$ denotes the number of faces of $\Sigma\cut D$ contained entirely in $X$.
Hence, $X$ is efficient for $D$ if and only if it contains as many faces of $\Sigma\cut D$ as possible.
\end{prop}

\begin{proof}
Collapse any acceptable lasso $X$ to a point to obtain a graph $G=D/X$ in the surface $\Sigma/X\equiv \Sigma$.  Then $G$ has one vertex and $\frac{1}{2}|\partial X\cap D|$ edges. Let $n$ denote the number of faces of $\Sigma \cut D$ contained entirely in $X$.

The fact that $X$ is acceptable implies that $D\cap X$ is connected, and thus that for each face $U$ of $\Sigma\cut D$, $U\cut X$ is either empty or connected.    Therefore, $|\Sigma\cut G|=|\Sigma\cut D|-n$.  Hence, as claimed:
\begin{align*}
2-2g=\chi(\Sigma)&=1-\frac{1}{2}|\partial X\cap D|+|\Sigma\cut D|-n\\
\frac{1}{2}|\partial X\cap D|&=2g-1+|\Sigma\cut D|-n.
\end{align*}
Of the quantities on the right-hand side, only $n$ depends on $X$.  Thus, $X$ is efficient for $D$ if and only if it contains as many faces of $\Sigma\cut D$ as possible.
\end{proof}

Denoting the set of faces of $\Sigma\cut D$ by $\{U_i\}_{i\in\mathcal{C}}$, the problem of computing the lasso number of $(\Sigma,D)$ is thus equivalent to finding the largest subset $\mathcal{L}\subset\mathcal{C}$ such that $\bigcup_{i\in\mathcal{L}}U_i$ is simply connected.

\begin{prop}
If $(\Sigma,D)$ is cellular and connected, then:
\begin{itemize}
\item every $D$-efficient lasso $X\subset \Sigma$ contains every monogon of $\Sigma\cut D$, and
\item there is a $D$-efficient lasso $X\subset \Sigma$ that contains every bigon of $\Sigma\cut D$.
\end{itemize}
\end{prop}

\begin{figure}
\begin{center}
\includegraphics[width=\textwidth]{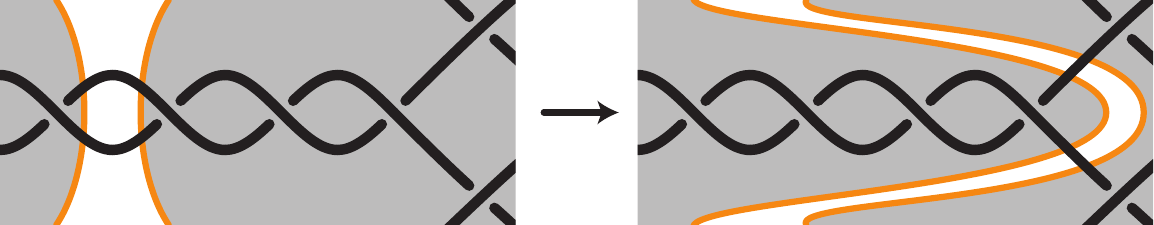}
\caption{Capturing bigons in a $D$-efficient lasso (shaded)}
\label{Fi:Capture}
\end{center}
\end{figure}

\begin{proof}
Assume $\Sigma$ has positive genus, or else the claim is trivial, and consider a $D$-efficient lasso $X$.  It is easy to see that it must contain every monogon of $\Sigma\cut D$. For each bigon $Y$ of $\Sigma\cut D$,  $X$ either contains $Y$ or can be modified near $Y$ as shown in Figure \ref{Fi:Capture} to increase the number of bigons $X$ contains, while preserving $|\partial X\cap D|$.  After enough such modifications, $X$ will still be $D$-efficient and
will contain every bigon of $\Sigma\cut D$.
 \end{proof}

Computing lasso numbers of large diagrams without bigons or monogons, however, seems challenging.  
We pose two problems:

\begin{problem}
Describe an efficient algorithm that computes the lasso number of an arbitrary connected diagram $D\subset \Sigma$.
\end{problem}

\begin{question}
Do we have $\text{lasso}(\Sigma,D)=\text{lasso}(\Sigma,L)$ for every cellular alternating link diagram $D\subset\Sigma$ of a link $L\subset\Sigma\times I$?
  \end{question}
 
\section{Correspondences}\label{S:Corr}

\subsection{Main correspondence}\label{S:MainCorr}

\begin{figure}
\begin{center}
\includegraphics[width=.4\textwidth]{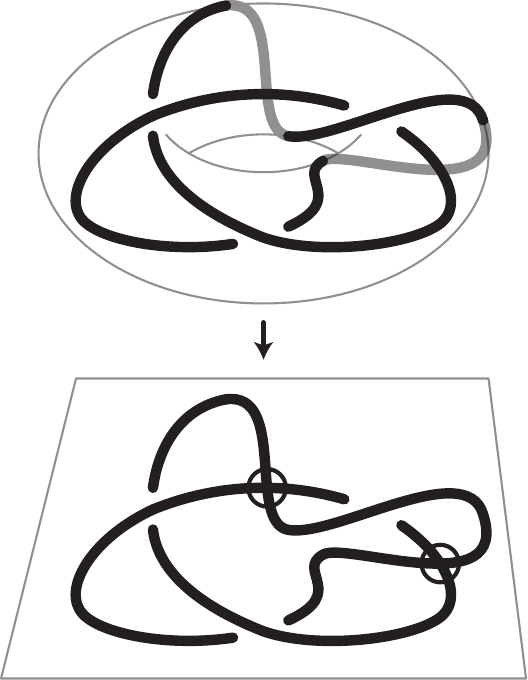}
\caption{A link diagram on the torus and a corresponding virtual diagram}
\label{Fi:Correspondence}
\end{center}
\end{figure}

We will establish Correspondence \ref{C:Diagrams} by showing that the following two constructions are inverses.  For clarity, we describe both constructions in the nonsplit setting; adapting to split links is straightforward. (This is why we defined lasso {\it systems} in \textsection\ref{S:Lassos}.)

\begin{construction}\label{C:LassoVtoD}
Given a nonsplit virtual link diagram $V'$, choose some $V\in [V']$ which admits an acceptable lasso $X$. Construct a pair $(\Sigma,D)$ by taking a regular neighborhood of $X\cup V$, changing the neighborhood of each virtual crossing as in Figure \ref{Fi:VtoA}, and (abstractly) capping off each resulting boundary component with a disk. Then $X$ embeds naturally in $\Sigma$ and is an acceptable lasso for $D$.
\end{construction}


\begin{construction}\label{C:LassoDtoV}
Suppose that $X$ is an acceptable lasso for $(\Sigma,D)$, which is cellular (and connected).
\begin{itemize}
\item Choose an embedding $\varphi:\Sigma\to\wh{S^3}$ 
such that $\varphi(X)$ lies entirely on the front of $\Sigma$ and $(\pi\circ\varphi(D\cap X))\cap(\pi\circ\varphi(D\setminus X))=\varnothing$.\footnote{A regular point $x$ of $\pi|_{\varphi(\Sigma)}$ lies on the {\it front} or {\it back} of $\varphi(\Sigma)$ depending on whether an {\it even} or {\it odd} number of points of $\varphi(\Sigma)$ lie directly above it, or equivalently whether $\pi\circ\varphi:\Sigma\to S^2$ is orientation-preserving or -reversing near $\varphi^{-1}(x)$.}
\item Perturb $\varphi$ so that all self-intersections in $\pi\circ\varphi(D\cut X)$ are transverse double-points. 
\item Then $\pi\circ\varphi(X)$ is an acceptable lasso for the virtual link diagram $\pi\circ\varphi(D)=V'\in[V]$.
\end{itemize} 
\end{construction}

\begin{correspondence}\label{C:Diagrams}
Constructions \ref{C:LassoVtoD} and \ref{C:LassoDtoV} are inverses and thus define a correspondence between cellular pairs $(\Sigma,D)$ and equivalence classes $[V]$ (both with specified acceptable lassos).
\end{correspondence}

\begin{proof}
It suffices to prove the nonsplit case. Extending to the split case is straightforward.

If $V'$ is a nonsplit virtual link diagram and $V\in[V']$ admits an acceptable lasso $X$, then applying Construction \ref{C:LassoVtoD} to $V$ and $X$ and then applying Construction \ref{C:LassoDtoV} to the result gives a pair $V''$ and $X''$ such that the lasso diagrams $(X,V\cap X)$ and $(X'',V''\cap X'')$ are equivalent.  Therefore, $V''\in [V]=[V']$.

If $X$ is an acceptable lasso for $(\Sigma,D)$, which is cellular and connected, then applying Construction \ref{C:LassoDtoV} to $(\Sigma,D)$ and $X$ and then applying Construction \ref{C:LassoVtoD} to the result gives a pair $(\Sigma',D')$ and a lasso $X'$ such that the lasso diagrams $(X,D\cap X)$ and $(X',D'\cap X')$ are equivalent.  Therefore, $(\Sigma,D)$ and $(\Sigma',D')$ are equivalent.  

Thus, Constructions \ref{C:LassoVtoD} and \ref{C:LassoDtoV} are inverses.  Moreover, given some $[V]$, Construction \ref{C:LassoVtoD} yields the same pair $(\Sigma,D)$ as the construction described in the theorem as state in \textsection\ref{S:intro}.  Therefore, that construction determines the same correspondence as Constructions \ref{C:LassoVtoD} and \ref{C:LassoDtoV}.
 \end{proof}

Correspondence \ref{C:Diagrams} gives a new diagrammatic perspective on a well-known correspondence:

\begin{correspondence}[\cite{kauff98,kaka,cks02}]\label{C:Links}
There is a correspondence between virtual links and stable equivalence classes of links in thickened surfaces.  Namely, choose any representative diagram and apply the diagrammatic Correspondence \ref{C:Diagrams}.
\end{correspondence}

Correspondences \ref{C:Diagrams} and \ref{C:Links} allow us to translate Corollary  \ref{C:LassoAltSD} to virtual link diagrams:

\begin{cor}\label{C:VSplit}
An alternating virtual link diagram $V$ represents a nonsplit virtual link $K$ if and only if some $V'\in[V]$ admits an {\it acceptable} lasso.
\end{cor}

\begin{proof}
If $K$ is nonsplit, then every $V'\in[V]$ is connected, so Proposition \ref{P:VLassoAcc} implies that some $V'\in[V]$ admits an acceptable lasso.  Conversely, if some $V'\in[V]$ admits an acceptable lasso, then Construction \ref{C:LassoVtoD} gives an acceptable lasso for the associated cellular alternating pair $(\Sigma,D)$. Corollary \ref{C:LassoAltSD} now implies
that $(\Sigma,D)$ represents a nonsplit link $L\subset\Sigma\times I$. Finally, since every diagram of $L$ on $\Sigma$ is connected, Correspondences \ref{C:Diagrams} and \ref{C:Links} imply that every virtual diagram of $K$ is connected. That is, $K$ is nonsplit.
\end{proof}

\subsection{Key subtlety}\label{S:Subtle}

There is an important caveat in Correspondences \ref{C:Diagrams} and \ref{C:Links} which is worth noting explicitly. It arose in Construction \ref{C:LassoDtoV}, where we chose $\varphi$ such that all crossings of $\varphi(D)$ lie on the front of $\varphi(\Sigma)$.

\begin{rem}\label{R:front}
In order for Construction \ref{C:LassoDtoV} to respect the Correspondences \ref{C:Diagrams} and \ref{C:Links}, we need the requirement that all crossings of $\varphi(D)$ lie on the front of $\varphi(\Sigma)$; otherwise, different embeddings $\Sigma\to \wh{S^3}$ may yield distinct virtual links.  See Example \ref{Ex:front}.
\end{rem}

\begin{figure}
\begin{center}
\labellist
\hair4pt
\pinlabel {\scalebox{.9}{$D$}} [c] at 272 57
\pinlabel {\scalebox{.9}{$D'$}} [c]  at 488 57
\pinlabel {\scalebox{.9}{$D''$}} [c]  at 68 57
\endlabellist
\includegraphics[width=.8\textwidth]{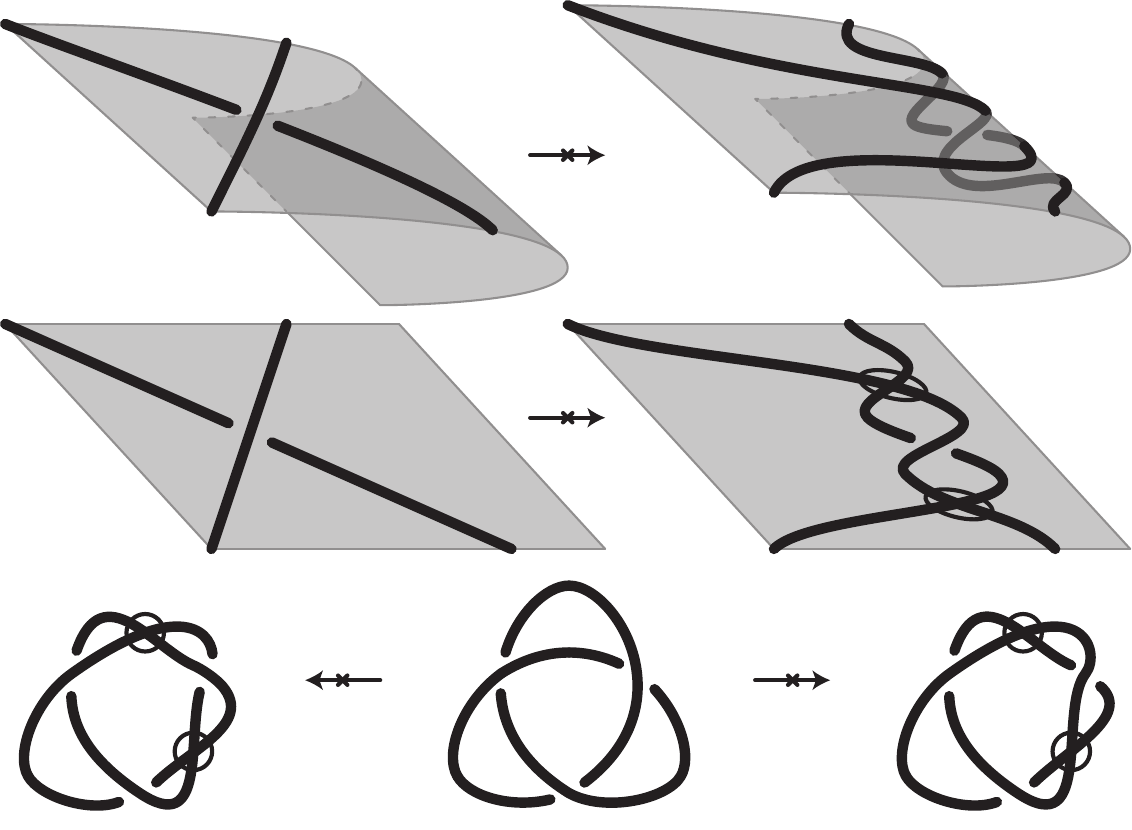}
\caption{Given $D\subset\Sigma$, one obtains $V\subset S^2$ by embedding $\Sigma$ in $S^3$ and projecting, but all crossings of $D$ must remain on the front of $\Sigma$.}
\label{Fi:PastCritical}
\end{center}
\end{figure}

\begin{example}\label{Ex:front}
Let $D$ be a minimal diagram of the RH trefoil on a 2-sphere $\Sigma$, and embed $\Sigma$ in $\wh{S^3}$ such that the critical locus of $\pi|_\Sigma$ is a simple closed curve and $D$ lies entirely on the front of $\Sigma$.  The corresponding virtual diagram $V$ is also a minimal diagram of the classical RH trefoil.  Now isotope $D$ on $\Sigma$ as shown in Figure \ref{Fi:PastCritical}, so that a crossing passes across the critical circle of $\pi|_\Sigma$.  Denote the resulting diagram on $\Sigma$ by $D'$. The virtual diagram $V'=\pi\circ\phi(D')$ represents the virtual knot $3.5$, which is distinct from the classical RH trefoil \cite{kauff98}, even though $D$ and $D'$ are isotopic on $\Sigma$. 

Interestingly, the virtual knot 3.5 has the same Jones polynomial as the RH trefoil, but the two can be distinguished using the involutory quandle, also called the fundamental quandle. Indeed, by Lemma 5 of \cite{kauff98}, the virtual knot 3.5 has the same involutory quandle as the unknot, which is distinct from that of the RH trefoil, since the former is trivial and the latter is not \cite{joyce}.

We note too that the diagram $D''$ in Figure \ref{Fi:PastCritical} represents the virtual knot $3.7$, which has trivial Jones polynomial and thus is also distinct from the RH trefoil.
\end{example}

\subsection{Diagrammatic extension of Kuperberg's Theorem}
%
%
Given a virtual diagram $V$ of, say, a nonsplit virtual link, one may define the {\it genus} $g(V)$ to be the genus $g(\Sigma)$, where $(\Sigma,D)$ corresponds to $[V]$ under Correspondence \ref{C:Diagrams}.  Note that $g([V])$ is well-defined. Say that $V$ has {\it minimal genus} if $g(V)\leq g(V')$ for all diagrams $V'$ of the same virtual link.  In general, given two diagrams $V$ and $V'$ of the same virtual link, it is plausible that {\it all} sequences $V=V_0\to\cdots\to V_n=V'$ might have some $V_i$ with $g(V_i)\gg\max\{g(V),g(V')\}$.  When $V$ and $V'$ have minimal genus, though, we have the following diagrammatic extension of Kuperberg's theorem:

\begin{theorem}\label{T:KupD}
All minimal genus diagrams of a nonsplit virtual link are related by minimal-genus-preserving generalized R-moves.
\end{theorem}

\begin{proof}
Let $V$ and $V'$ be two such virtual diagrams, let $(\Sigma,D)$ and $(\Sigma',D')$ be the  diagrams corresponding to $[V]$, $[V']$ under Correspondence \ref{C:Diagrams}, and let $L\subset\Sigma\times I$ and $L'\subset\Sigma'\times I$ be the links they represent.  The minimal genus condition ensures that $L$ and $L'$ are nonstabilized, so by Kuperberg's Theorem there is a pairwise orientation-preserving homeomorphism $(\Sigma\times I,L)\to(\Sigma'\times I,L')$. Therefore, there is a sequence $D=D_0\to\cdots\to D_n$ of R-Moves  on $\Sigma$ such that there is a pairwise homeomorphism $(\Sigma,D_n)\to(\Sigma',D')$.  Applying Correspondence \ref{C:Diagrams} to the sequence $(\Sigma,D_0)\to\cdots\to(\Sigma,D_n)$ gives a sequence $[V]=[V_0]\to\cdots\to[V_n]=[V']$ where each $g([V_i])$ is minimal and each pair $[V_i],[V_{i+1}]$ have representatives that differ by a single classical R-move; refining this sequence with non-classical R-moves gives the desired sequence of generalized R-moves taking $V$ to $V'$.
\end{proof}

\subsection{Extending the correspondences to lasso diagrams}

Consider the following moves on lasso diagrams, and see Figure \ref{Fi:Moves}):

\begin{move}\label{M:LDMove1}
Change a lasso diagram $(X,V)$ to another $(X,V')$ which we construct as follows (or do the inverse operation):
\begin{itemize}
\item Choose a properly embedded arc $\alpha\subset X\cut V$ with one endpoint $x\in \partial X\setminus V$ and the other $v\in V\cap\text{int}(X)$, not at a crossing.
\item Fixing $V\cut \nu v$, isotope $V\cap\nu v$ through $\nu \alpha$ to $\partial X$, creating two new endpoints, and call this $V'$; 
\item Keep all the labels on $\partial X$, and add a new pair of labels for the two new endpoints.
\end{itemize}
\end{move}

\begin{figure}
\begin{center}
\labellist \tiny
\pinlabel {$\FG{\alpha}$} at 90 47
\pinlabel {1} at 308 99
\pinlabel {1} at 328 99
\endlabellist
\includegraphics[width=.6\textwidth]{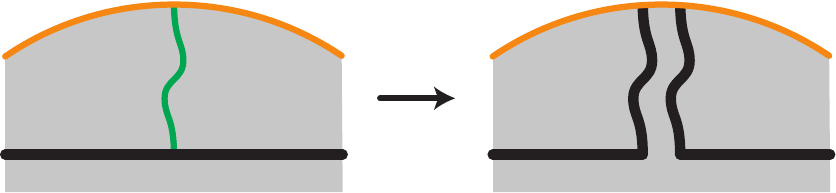}\\
\labellist \tiny
\pinlabel {$\FG{\alpha}$} at 200 150
\pinlabel {1} at 358 284
\pinlabel {1} at 358 31
\pinlabel {2} at 627 165
\pinlabel {2} at 642 149
\pinlabel {3} at 734 159
\pinlabel {3} at 747 172
\pinlabel {4} at 842 149
\pinlabel {4} at 856 166
\endlabellist
\includegraphics[width=\textwidth]{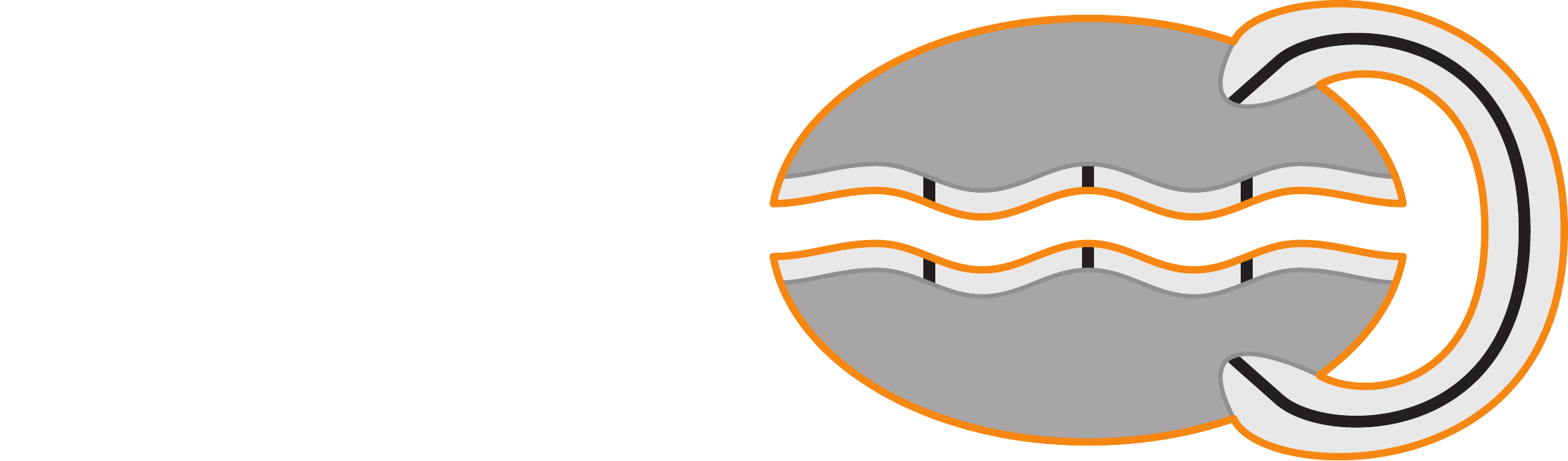}
\caption{Moves \ref{M:LDMove1} (top) and \ref{M:LDMove2} (bottom)}
\label{Fi:Moves}
\end{center}
\end{figure}

Note that a lasso diagram resulting from the type of Move \ref{M:LDMove1} described is never acceptable. Viewing $(X,V)$ as the part of a cellular pair $(\Sigma,D)$ in a lasso $X$, the inverse version of Move \ref{M:LDMove1} corresponds to isotoping an (outermost) arc of $D\cut X$ through $\Sigma\cut X$ past $\partial X$.

\begin{move}\label{M:LDMove2}
Change a lasso diagram $(X,V)$ to another $(X',V')$ which we construct as follows  (or do the inverse operation):
\begin{itemize}
\item Choose a properly embedded arc $\alpha\subset X$ that intersects $V$ generically such that some label $\ell$ appears once on each arc of $\partial X\setminus\partial\alpha$.
\item Attach an (oriented) band to $X\cut\nu\alpha$  near the two $\ell$-labels; call this $X'$;
\item Glue the core of the band to $V\cut\nu\alpha$; call this $V'$;
\item Label $\partial X'$ with all labels from $\partial X$, except the $\ell$'s, together a new pair of labels for each arc of $V\cap\nu\alpha$.
\end{itemize}
\end{move}

The diagrammatic  Correspondence \ref{C:Diagrams} extends to equivalence classes of lasso diagrams under Moves \ref{M:LDMove1}-\ref{M:LDMove2}:

\begin{theorem}\label{T:LDMoves}
Let $X_1$ and $X_2$ be lassos for nonsplit virtual link diagrams $V_1$ and $V_2$.  Then $[V_1]=[V_2]$ if and only if the lasso diagrams $(X_i,V_i\cap X_i)$ are related by a sequence of Moves \ref{M:LDMove1}-\ref{M:LDMove2}. 
\end{theorem}

\begin{proof}
Let $(\Sigma_i,D_i)$ be the cellular diagrams corresponding to $[V_i]$, let $Y_i$ be lassos for $(\Sigma_i,D_i)$ that correspond to the lassos $X_i$ for $V_i$, so that each $(X_i,V_i\cap X_i)\equiv(Y_i,D_i\cap Y_i)$, and let $G_i$ be the underlying graph of $D_i$.

Suppose that $[V_1]=[V_2]$. Then by Correspondence \ref{C:Diagrams}, we may identify $(\Sigma_1,D_1)\equiv(\Sigma_2,D_2)$, denoting this $(\Sigma,D)$, and we may write $G$ for the graph $G_1\equiv G_2$. Perform a sequence of Move \ref{M:LDMove2}'s on each $(Y_i,D\cap Y_i)$ so that in the resulting lasso diagram $(Y'_i,D\cap Y'_i)$, $D\cap Y'_i$ is connected (this is possible because $D$ is connected; see the proof of Proposition \ref{P:LassoSD} for details). Next, perform a sequence of Move \ref{M:LDMove1}'s on each $(Y'_i,D\cap Y'_i)$ so that so that in the resulting lasso diagram $(Y''_i,D\cap Y''_i)$ no disk of $\Sigma\cut D$ lies entirely in $Y''_i$.  It follows that $G$ has spanning trees $T_i$, $i=1,2$, such that each $(Y''_i,D\cap Y''_i)\equiv(\nu T_i,D\cap\nu T_i)$. Further,  $G$ has a sequence of spanning trees $T_1=T_{11}\to T_{12}\to\cdots\to T_{1n}=T_2$ such that each $T_{1i}\to T_{1(i+1)}$ replaces one edge of $T_{1i}$ with another edge.  Each corresponding move $(\nu T_i,D\cap \nu T_i)\to (\nu T_{i+1},D\cap \nu T_{i+1})$ is a Move \ref{M:LDMove2}.  

The converse is evident.
\end{proof}

It follows that the older  Correspondence \ref{C:Links} extends to equivalence classes of lasso diagrams under Moves \ref{M:LDMove1}-\ref{M:LDMove2} and classical R-moves:

\begin{cor}\label{C:Links2}
There is a triple bijective correspondence between (i) virtual links, (ii) stable equivalence classes of links in thickened surfaces, and (iii) equivalence classes of lasso diagrams under Moves \ref{M:LDMove1}-\ref{M:LDMove2} and classical Reidemeister moves.  
\end{cor}


\section{Characterizations of local and pairwise primeness}

\subsection{Preliminaries}

\subsubsection{Checkerboard colorability}\label{S:CB}

A cellular pair $(\Sigma,D)$ is {\bf checkerboard colorable} if one can color the disks of $\Sigma\cut D$ black and white so that regions of the same shade abut only at crossings. Call a virtual link diagram $V$ checkerboard colorable if the cellular pair associated to $[V]$ under Correspondence \ref{C:Diagrams} is checkerboard colorable.  
Call a virtual link $K$ checkerboard colorable if its diagrams are {\it }all checkerboard colorable.

\begin{prop}[\cite{oz06,bk20}]\label{P:CB}
Suppose $D\subset \Sigma$ is a cellular alternating diagram of a link $L\subset \Sigma\times I$. Then $(\Sigma,D)$ is checkerboard colorable, and $L$ is nullhomologous over $\Z/2$.
\end{prop}

\begin{proof}
As $D$ is cellular alternating, one may orient each disk of $\Sigma\cut D$
so that, under the resulting boundary orientation, over-strands run toward crossings and under-strands away from crossings.  Since $\Sigma$ is orientable, one may compare these local orientations in a globally coherent way. This partitions the disks of $\Sigma\cut D$ into two classes, determining a checkerboard coloring from which one can construct {\it checkerboard surfaces} 
 for $L$, 
so $L$ is nullhomologous over $\Z/2$.
\end{proof}

\begin{rem}
By similar reasoning, cellular alternating link diagrams on nonorientable surfaces are {\it never} checkerboard colorable. 
\end{rem}

\begin{rem}\label{R:CBStab}
If $L\subset\Sigma\times I$ is nullhomologous over $\Z/2$, then this remains true under stabilization, but not necessarily destabilization.
\end{rem}

Remark \ref{R:CBStab} and Proposition \ref{P:CB} imply:

\begin{cor}
A virtual link $K$ is checkerboard colorable if and only if, for the minimal genus pair $(\Sigma,L)$ associated to $K$ under Correspondence \ref{C:Links} and Kuperberg's theorem, $L$ is nullhomologous over $\Z/2$.
\end{cor}

\subsubsection{Pairwise connect sums: local and annular}\label{S:Sums}
Given two 
manifolds $M_1$ and $M_2$ of dimension $n$, a connect sum $M_1\#M_2$ is constructed by removing the interior of an $n$-ball $B_i$ from each $M_i$ and gluing $M_1\setminus\text{int}(B_1)$ to $M_2\setminus\text{int}(B_2)$ by a 
homeomorphism $\partial B_1\to\partial B_2$. If $M_1$ and $M_2$ are oriented, one requires this homeomorphism to be orientation-reversing.

Given two pairs $(\Sigma_1,D_1)$ and $(\Sigma_2,D_2)$, a {\bf pairwise connect sum} $(\Sigma_1,D_1)\#(\Sigma_2,D_2)=(\Sigma,D)$ is constructed by taking $\Sigma_1\#\Sigma_2=\Sigma$ in such a way that the disk $B_i$ whose interior is removed from each $\Sigma_i$ intersects $D_i$ in a single arc with no crossings.  (Different choices of $B_1$ and $B_2$, may well lead to diagrams that represent different links in $\Sigma\times I$.) 
There is then a separating circle $\gamma\subset\Sigma$ such that  $(\Sigma\cut \gamma,D\cut \gamma)$ is pairwise homeomorphic to 
\[\left((\Sigma_1\cut B_1)\sqcup(\Sigma_2\cut B_2),(D_1\cut B_1)\sqcup(D_2\cut B_2)\right),\] meaning that there is a homeomorphism 
\[f:\Sigma\cut \gamma\to (\Sigma_1\cut B_1)\sqcup(\Sigma_2\cut B_2)\]
 that restricts to a homeomorphism (respecting crossing information)
 \[D\cut \gamma\to (D_1\cut B_1)\sqcup(D_2\cut B_2).\]  
We sometimes specify this circle $\gamma$ in our notation by writing 
 \[(\Sigma_1,D_1)\#_\gamma(\Sigma_2,D_2)=(\Sigma,D).\] 
 Inversely, given a pair $(\Sigma,D)$, any separating circle $\gamma\subset \Sigma$ with $|\gamma\pitchfork D|=2$ determines (up to pairwise homeomorphism of the resulting factors) a decomposition $(\Sigma_1,D_1)\#_\gamma(\Sigma_2,D_2)=(\Sigma,D)$. We call this {\it local} if either $\Sigma_i$ is a 2-sphere, and we call it {\it trivial} if either $(\Sigma_i,D_i)$ is pairwise homeomorphic to $(S^2,\bigcirc)$. 
 
 We now turn from diagrams to links.
Given two pairs $(\Sigma_1,L_1)$ and $(\Sigma_2,L_2)$, choose respective diagrams $(\Sigma_1,D_1)$ and $(\Sigma_2,D_2)$, construct $(\Sigma_1,D_1)\#_\gamma(\Sigma_2,D_2)=(\Sigma,D)$ as above, and, letting $L$ denote the link in $\Sigma\times I$ represented by $D$, write  $(\Sigma_1,L_1)\#_\gamma(\Sigma_2,L_2)=(\Sigma,L)$, perhaps omitting ``$\gamma$''. This operation is called an {\bf annular connect sum}.  Figure \ref{Fi:ACS} shows an example. If either $\Sigma_i$ is a 2-sphere, we call this a {\bf local connect sum}, since it can also be realized as a pairwise connect sum  
\[(\Sigma_{3-i}\times I,L_{3-i})\#(S^3,L_i)=(\Sigma\times I,L).\]

\begin{figure}
\begin{center}
\labellist \small
\pinlabel{$\#$} at 222 65
\pinlabel{$=$} at 480 65
\endlabellist
\includegraphics[width=\textwidth]{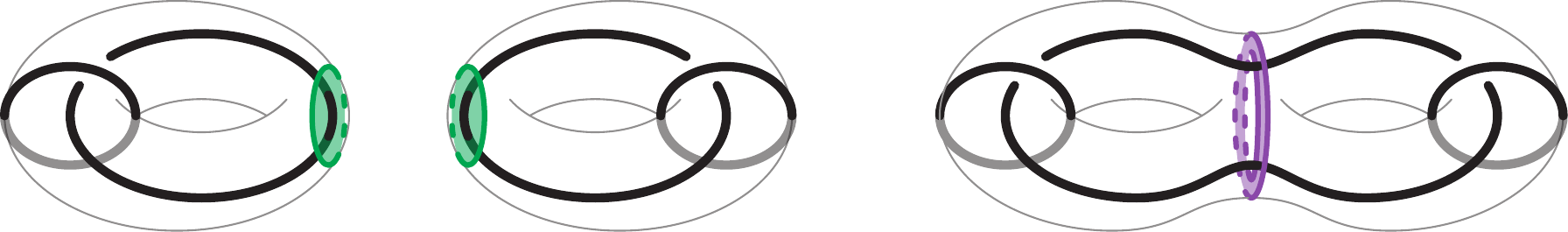}\\
\caption{An annular connect sum}
\label{Fi:ACS}
\end{center}
\end{figure}

Inversely, given a pair $(\Sigma,L)$, if a properly embedded annulus $A\subset \Sigma\times I$ with $|A\pitchfork L|=2$ has one boundary component on each of $\Sigma_\pm$, then cutting $\Sigma\times I$ along $A$ and gluing on two 3-dimensional 2-handles (each containing a properly embedded arc) in the natural way gives a(n annular connect sum) decomposition $(\Sigma,L)=(\Sigma_1,L_1)\#(\Sigma_2,L_2)$, which again is unique up to pairwise homeomorphism of the resulting factors. 

Several comments are in order, but we postpone them until \textsection\ref{S:Comments}, so that we can use the correspondences we will establish in \textsection\ref{S:LChar} to justify their details.  


\subsubsection{Bi-annular sums}\label{S:Bi}

\begin{figure}
\begin{center}
\labellist \small
\pinlabel{$\#$} at 237 65
\pinlabel{$=$} at 494 65
\endlabellist
\includegraphics[width=\textwidth]{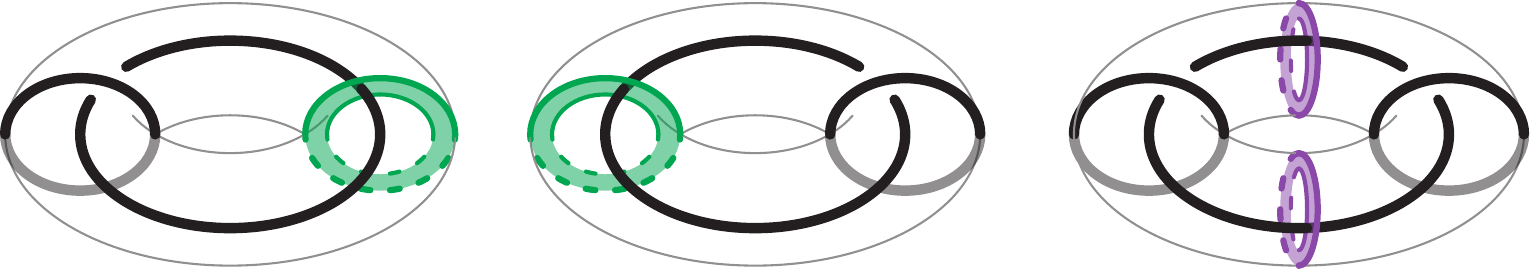}\\
\caption{A bi-annular sum}
\label{Fi:BiAnnular}
\end{center}
\end{figure}

Given two pairs $(\Sigma_1,L_1)$ and $(\Sigma_2,L_2)$, suppose that each $\Sigma_i\times I$ contains a properly embedded annulus $A_i$ that intersects $L_i$ transversally in a single point.  (Note that neither $(\Sigma_i,L_i)$ can then be checkerboard colorable.) Cut each $\Sigma_i\times I$ along $A_i$ and glue each resulting copy of $A_1$ to a copy of $A_2$ so that the endpoints of the $L_i$ align.   Call
\[(\Sigma,L)=((\Sigma_1\cut A_1)\cup(\Sigma_2\cut A_2),(L_1\cut A_1)\cup(L_2\cut A_2))\]
a {\bf bi-annular sum}. In an abuse of notation, write
\[(\Sigma,L)=(\Sigma_1,L_1)\#_{A_1\sqcup A_2}(\Sigma_2,L_2),\]
perhaps omitting the subscript ``$A_1\sqcup A_2$.'' If one properly isotopes $A_1\sqcup A_2$ in $\Sigma\times I$ to make each $A_i$ vertical, so that $A_i=\gamma_i\times I$ for some circle $\gamma_i\subset\Sigma$, extends to an ambient isotopy of $\Sigma\times I$, and perturbs the resulting embedding of $L$ so that it projects to a diagram $(\Sigma,D)$, then $(\Sigma,D)$ decomposes as a {\it diagrammatic bi-annular sum} along $\gamma_1\sqcup\gamma_2$.
Write
\[(\Sigma,D)=(\Sigma_1,D_1)\#_{\gamma_1\sqcup\gamma_2}(\Sigma_2,D_2),\]
possibly omitting the subscript $``\gamma_1\sqcup \gamma_2$.'' See Figure \ref{Fi:BiAnnular}, where each $A_i$ is green in $\Sigma_i\times I$ and each $\gamma_i\times I$ is purple in $\Sigma\times I$. 

Inversely, given a pair $(\Sigma,L)$, suppose $A_1,A_2\subset \Sigma\times I$ are disjoint properly embedded annuli, each with one boundary component in each $\Sigma_\pm$ with $|A_i\pitchfork L|=1$, such that $(\Sigma\times I)\cut (A_1\cup A_2)$ is disconnected. (Again, note that $(\Sigma,L)$ cannot be checkerboard colorable.) Cut $\Sigma\times I$ along $A_1\cup A_2$, and write $M_1$ and $M_2$ for the resulting connected components.
Each $\partial M_i$ contains a copy of $A_1$ and a copy of $A_2$. Glue them together so that the points of $L\cap A_i$ align.  This yields two pairs $(\Sigma_1,L_1)$ and $(\Sigma_2,L_2)$ that have $(\Sigma,L)$ as a bi-annular sum. 


The diagrammatic pairwise connect sum operation adapts naturally to virtual links and their diagrams: simply replace the pairs $(\Sigma_1,D_1)$, $(\Sigma_2,D_2)$, and $(\Sigma,D)$ in \textsection\ref{S:Sums} with virtual link diagrams $V_1$, $V_2$, and $V$ on $S^2$, representing virtual links $K_1$, $K_2$, and $K$, and write $(S^2,V_1)\#(S^2,V_2)=(S^2,V)$ or just $V_1\# V_2=V$, and write $K_1\#K_2=K$. Say that $V=V_1\#V_2$  is {\it nontrivial} if both $V_1$ and $V_2$ have classical crossings.




\subsection{Definitions of primeness}

\begin{definition}\label{D:VPrime}
A virtual link diagram $V $ is {\bf prime} if it has no nontrivial connect sum decomposition.
\end{definition}

\begin{rem}
Given a virtual knot diagram $V$ that comes from a Gauss code $G$, $V$ is nonprime if and only if, after some cyclic permutation, $G$ has the form $(a_1,\hdots, a_k,b_1,\hdots,b_\ell)$ where $k,\ell>0$ and $b_i\neq -a_j$ for all $i,j$ \cite{kauff98}.
\end{rem}

\begin{definition}\label{D:KPrime}
A virtual link $K\neq \bigcirc $ is {\bf prime} if for every diagram $V$ of $K$ and every decomposition $V=V_1\#V_2$, one diagram $V_i$ represents the classical unknot and the other represents $K$.
\end{definition}

Definition \ref{D:KPrime} is well motivated by the project of tabulating virtual knots and links, where, as in the classical setting, only primes are recorded.  Then every virtual link comes from the table via connect sum, although, unlike the classical setting, there are multiple (in fact infinitely many) ways to take the connect sum of two virtual links. 
Yet, Definition \ref{D:KPrime} is not quite traditional.  See the remark after Theorem \ref{T:01}

Call an annular or local connect sum decomposition $(\Sigma,L)=(\Sigma_1,L_1)\#(\Sigma_2,L_2)$ {\it trivial} if $(\Sigma_i\times I, L_i)$ is pairwise homeomorphic to $(S^2\times I,\bigcirc)$ for $i=1$ or $i=2$. Call a bi-annular sum decomposition $(\Sigma,L)=(\Sigma_1,L_1)\#_{A_1\sqcup A_2}(\Sigma_2,L_2)$ {\it trivial} if $A_1\cup A_2$ cuts off a thickened annulus from $\Sigma\times I$ that $L$ intersects in an unknotted arc.

\begin{rem}
Whereas every $(\Sigma,L)$ decomposes as a trivial local connect sum, $(\Sigma,L)$ decomposes as a trivial bi-annular sum if and only if $\Sigma\times I$ contains a properly embedded annulus $A$ with $|A\pitchfork L|=1$.
\end{rem}

 
\begin{definition}\label{D:LLPrime}
A nonstabilized link $L$ in a connected thickened surface $\Sigma\times I$ is {\bf locally prime} if it has no nontrivial local connect sum decomposition.\footnote{This is the traditional notion of primeness for $(\Sigma,L)$.  Chrisman calls such $(\Sigma,L)$ ``locally trivial" \cite{ch20}.}
\end{definition}

\begin{definition}\label{D:LPPrime}
A nonstabilized link $L$ in a connected thickened surface $\Sigma\times I$ is {\bf pairwise prime} if it has no nontrivial decomposition as an annular connect sum nor as a bi-annular sum.
\end{definition}

\begin{rem}
Definitions \ref{D:LLPrime} and \ref{D:LPPrime} extend easily to pairs $(\Sigma,L)$ that are stabilized (by choosing the unique nonstabilized representative of its stable equivalence class), or disconnected (by requiring each component to be locally prime). Theorem \ref{T:01} will extend in the same way to split links.
\end{rem}

We adapt Definitions \ref{D:LLPrime} and \ref{D:LPPrime} from links to diagrams:

\begin{definition}\label{D:DLPrime}
A nontrivial pair $(D,\Sigma)$ is {\bf weakly prime} if it admits no nontrivial {\it local} connect sum decomposition, i.e. if the only decompositions $(\Sigma,D)=(\Sigma_1,D_1)\#(S^2,D_2)$ are those with $(\Sigma_1,D_1)=S^2,\bigcirc)$ or $D_2=\bigcirc$. 
\end{definition}

This terminology, due to Howie--Purcell \cite{hp20}, highlights that this condition differs from one in \cite{oz06}, where Ozawa calls $(\Sigma, D)$ {\it strongly prime} if $\Sigma$ is connected and every curve on $\Sigma$ (not necessarily separating) that intersects $D$ generically in two points also bounds a disk in $\Sigma$ which contains no crossings of $D$. We also define:

\begin{definition}\label{D:LDPrime}
A nontrivial pair $(D,\Sigma)$ is {\bf pairwise prime} if it admits no nontrivial {\it pairwise} connect sum decomposition, i.e. if the only decompositions $(\Sigma,D)=(\Sigma_1,D_1)\#(\Sigma_2,D_2)$ are those with $(\Sigma_i,D_i)=S^2,\bigcirc)$ for $i=1$ or $i=2$. 
\end{definition}

\subsection{Characterizations}\label{S:LChar}

 \begin{theorem}\label{T:01}
Given a nonsplit virtual link $K$ and the corresponding nonstabilized link $L$ in a thickened surface $\Sigma\times I$, 
\begin{enumerate}
\item $(\Sigma,L)$ is locally prime if and only if $K$ admits no nontrivial connect sum decomposition $K=K_1\#K_2$ in which $K_1$ is a classical link and $g(K_2)=g(K)$, and
\item $K$ is prime if and only if $(\Sigma,L)$ is pairwise prime.
\end{enumerate}
\end{theorem}

\begin{rem}
Matveev {\it defines} prime virtual links this way \cite{mat} but does not mention (or prove) that this characterization coincides with Definition \ref{D:KPrime}. Part (2) of Theorem \ref{T:01} confirms that our natural definition is equivalent to Matveev's.
\end{rem}

\begin{rem}
It is easy to determine the genus of a virtual link $K$ with an alternating virtual diagram $V$: apply Correspondence \ref{C:Diagrams} to get an associated diagram $(D,\Sigma)$.  Then $g(K)=g(\Sigma)$, by Corollary \ref{C:bk36++}.
\end{rem}


\begin{proof}[Proof of Theorem \ref{T:01}]
The proof will be constructive, using just slight modifications of the procedures in Correspondence \ref{C:Diagrams}.  

Suppose $K$ has a virtual diagram $V$ with a diagrammatic connect sum decomposition $V=V_1\#_\gamma V_2$ in which, if either summand $V_i$ represents the trivial knot, then the other does not represent $K$. Choose an acceptable lasso $X$ for $V$ and isotope to minimize $|X\cap\gamma|$.  
Take a regular neighborhood $N$ of $V\cup X$ such that each arc of $\gamma\cap N$ contains at least one of the two points of $\gamma\cap V$.  Change the neighborhood of each virtual crossing as in Figure \ref{Fi:VtoA}. Call this $N'$.  Cap off each resulting boundary component abstractly with a disk to get a pair $(\Sigma,D)$, as in Correspondence \ref{C:Diagrams}. While capping off, extend $\gamma\cap N'$ to a closed 1-manifold $\gamma'\subset \Sigma$ by attaching an arc in each disk of $\Sigma\cut N'$ that contains endpoints of $\gamma\cap N'$. 
The diagram $(\Sigma,D)$ decomposes along $\gamma'$ as either an annular connect sum or a bi-annular sum.  The corresponding decomposition of $(\Sigma,L)$ is nontrivial, or else one $V_i$ would represent the trivial knot and the other would represent $K$. Thus, $K$ is not prime.

This proves on direction of (2). A similar argument proves the corresponding direction of (1): assume that $K$ has a virtual diagram $V=V_1\#_\gamma V_2$ where $V_1$ represents a classical link $K_1$ and $V_2$ represents a link $K_2$ with $g(K_2)=g(K)$.  Continue as before.  Then, in the nontrivial decomposition of $(\Sigma,L)$, the summand $(\Sigma_2,L_2)$ corresponding to $K_2$ has genus at least, hence equal to, $g(\Sigma)$.  Thus, the genus of the other summand is zero, and the decomposition is a local connect sum, and so $(\Sigma,L)$ is not locally prime.



For the converses, suppose first that $K$ is not prime.  Then $(\Sigma,L)$ decomposes nontrivially as either a bi-annular or annular sum, hence has a diagram $(\Sigma,D)$ that decomposes as either 
\[(\Sigma,D)=(\Sigma_1,D_1)\#_{\gamma_1\sqcup\gamma_2}(\Sigma_2,D_2)\text{ or }(\Sigma,D)=(\Sigma_1,D_1)\#_{\gamma}(\Sigma_2,D_2)\]
where neither $(\Sigma_i,D_i)$ is a diagram of the unknot on $S^2$. Let $K_i$ be the virtual link corresponding to the stable equivalence class of $(\Sigma_i,L_i)$, $i=1,2$.  If, say, $K_1$ is trivial, then $\Sigma_1$ has positive genus, so $g(\Sigma_2)<g(\Sigma)$, and since $(\Sigma,L)$ is nonstabilized, it follows that $K_2$ and $K$ are distinct. This completes the proof of (2).

If $(\Sigma,L)$ is not locally prime, then the same construction gives a nontrivial connect sum decomposition $K=K_1\#K_2$ in which $K_1$ is a classical link and $g(K_2)=g(K)$.
\end{proof}



\subsection{Commentary}\label{S:Comments}

Korablev--Matveev show that virtual knots decompose uniquely under connect sum:

\begin{theorem}[Theorem 12 of \cite{mat}, Theorem 2 of \cite{km11}]
Any virtual knot $K$ decomposes as a connect sum of prime and trivial virtual knots. The resulting prime summands depend only on $K$.
\end{theorem}

The following example of Kishino, however, shows that the converse is spectacularly false:

\begin{example}[\cite{kishino}]\label{Ex:Kishino}
Both summands $(\Sigma_i,L_i)$ in the annular connect sum $(\Sigma,L)=(\Sigma_1,L_1)\#(\Sigma_2,L_2)$ shown in Figure \ref{Fi:Kishino} are once-stabilizations of the unknot, but $(\Sigma,L)$ corresponds to a nontrivial virtual knot called the {\it Kishino knot}.
\end{example}

\begin{figure}
\begin{center}
\labellist\tiny
\pinlabel{$\#$} at 237 72
\pinlabel{$=$} at 494 72
\pinlabel{$\equiv$} at 970 72
\endlabellist
\includegraphics[width=\textwidth]{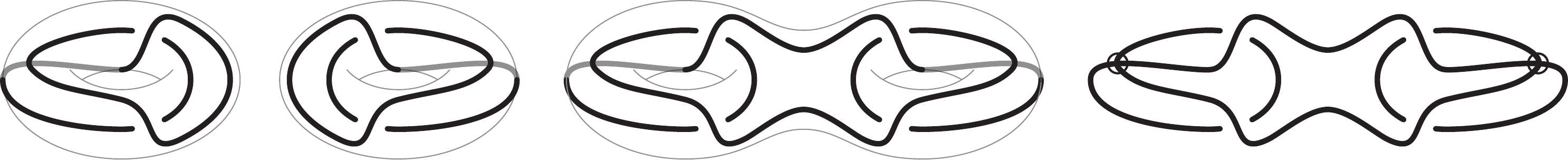}
\caption{The Kishino knot}
\label{Fi:Kishino}
\end{center}
\end{figure}

We note that, in general, if  $(\Sigma,L)=(\Sigma_1,L_1)\#(\Sigma_2,L_2)$ and both $(\Sigma_i,L_i)$ are nonstabilized, then $(\Sigma,L)$ is also nonstabilized. Example \ref{Ex:Kishino} shows that the converse of this, too, is false.  In particular, genera of virtual links are not additive under connected sum. Kauffman--Manturov show:

\begin{theorem}[Theorem 4 of \cite{km06}]
Let $K$ be a classical knot, let $K_1$ and $K_2$ be non-classical virtual knots. Then, for any connect sums $K\#K_1$ and $K_1\#K_2$, we have $g(K\#K_1)\geq g(K_1)$ and $g(K_1\#K_2)\geq g(K_1)+g(K_2)-1$.
 \end{theorem}

\begin{theorem}[Theorem 5 of \cite{km06}]\label{T:513}
If a connect sum $K_1\#K_2$ of virtual knots is trivial, then $K_1$ and $K_2$ are trivial.  
 \end{theorem}

We again note that Kishino's knot shows that the converse of Theorem \ref{T:513} is false and thus that connect sum is not a well-defined binary operation on virtual knots.

\subsection{De-summing lasso diagrams}\label{S:LassoSum}

\begin{definition}\label{D:DeSum}
Let $(X,V)$ be a lasso diagram, and let $\alpha\subset X$ be a properly embedded arc that intersects $V$ generically such that both disks of $X\cut\alpha$ contain crossings of $V$.  Write $m$ for the number of label-pairs on $\partial X$ that lie in opposite arcs of $\partial X\cut\partial\alpha$, and write $n=|\alpha\cap V|$.  If $m+n=2$, we call $\alpha$ a {\bf de-summing arc} for $(X,V)$. If $n=2$ and all labels on $\partial X$ lie in the same arc of $\partial X\cut\partial\alpha$, then there is a disk $U\subset X$ with $\partial U\cap V=\alpha\cap V=(\text{2 points})$, whose interior contains crossings of $V$; call $U$ a {\bf de-summing disk} for $(X,V)$.
\end{definition}

In Definition \ref{D:DeSum}, note that $m+n$ is always even (as $H_1(S^2;\Z/2)=0$), and $n$ is always positive when $(X,V)$ is acceptable.  


\begin{obs}\label{O:LDPrime}
A virtual link diagram $V$ which admits an acceptable lasso $X$ is prime if and only if the lasso diagram $(X,V\cap X)$ admits no de-summing arc.
\end{obs}

\begin{obs}\label{O:LDLPrime}
A virtual link diagram $V$ which admits an acceptable lasso $X$ is locally prime if and only if the lasso diagram $(X,V\cap X)$ admits no de-summing disk.
\end{obs}

\section{Alternating links in thickened surfaces}\label{S:ThickPrime}

The remainder of the paper is devoted to describing how to determine local and pairwise primeness by inspection in the alternating case.  This section does this for links in thickened surfaces, where local primeness was already characterized by \cite{adamsetal,hp20}.  The next section extends this section's results to virtual link diagrams and lasso diagrams.

 \subsection{Split alternating links in thickened surfaces}\label{S:SplitAlt}

For both technical and expository reasons, we examine splitness before addressing primeness.

\begin{fact}\label{F:Irr}
If $L_1$ is a link in a 3-manifold $M$ and $M\setminus L_1$ is irreducible, and if $L_2$ is a nonsplit link in $S^3$, then the complement of $L_1\#L_2$ in $M=M\#S^3$ is also irreducible.
\end{fact}

Fact \ref{F:Irr}, which follows from a standard innermost circle argument, extends to this fact about pairwise connect sums:

\begin{prop}\label{P:CSSplit}
Suppose $(\Sigma,L)=(\Sigma_1,L_1)\#_\gamma(\Sigma_2,L_2)$, where both $(\Sigma_i,L_i)$ are nonsplit. Then $(\Sigma,L)$ is also nonsplit.
\end{prop}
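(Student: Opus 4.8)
The plan is to argue contrapositively: assuming $(\Sigma,L)$ is split, I would produce a splitting of $(\Sigma_1,L_1)$ or of $(\Sigma_2,L_2)$. Write $\Sigma=\Sigma_1\#_\gamma\Sigma_2$ as the two pieces $(\Sigma_i\setminus\mathrm{int}\,X_i)$ glued along $\gamma$, where $X_i\subset\Sigma_i$ is a disk meeting $L_i$ in a single unknotted arc $\alpha_i$, so that $A=\gamma\times I$ is a vertical annulus in $\Sigma\times I$ which meets $L$ transversally in exactly two points and separates $\Sigma\times I$ into the pieces $M_i=(\Sigma_i\setminus\mathrm{int}\,X_i)\times I$, $i=1,2$. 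A convenient first reduction: if $\Sigma_1$ or $\Sigma_2$ is a sphere, say $\Sigma_2=S^2$, then $\Sigma=\Sigma_1$ and $L$ is obtained from $L_1$ by connect-summing the classical link underlying $(S^2,L_2)$ into one strand of $L_1$ inside a ball $Y\times I\subset\Sigma_1\times I$, where $Y=\Sigma_2\setminus\mathrm{int}\,X_2$ is a disk; in that case one runs the argument below with $A$ replaced by the $2$-sphere $\partial(Y\times I)$ (which meets $L$ in two points) and invokes in addition the classical fact that a connect sum of nonsplit classical links is nonsplit. So assume henceforth that neither $\Sigma_i$ is a sphere; then $\gamma$ is essential in $\Sigma$ and $A$ is incompressible in $\Sigma\times I$.

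Recall that $(\Sigma,L)$ being split means there is a splitting surface $S\subset(\Sigma\times I)\setminus L$: either a $2$-sphere separating $\Sigma\times I$ into two pieces each meeting $L$, or an essential vertical annulus $\delta\times I$ with $\delta\subset\Sigma$ separating and $\delta\times I$ separating $L$ into two nonempty sublinks. (If $\Sigma$ has a sphere component or is disconnected so that $\Sigma\times I$ fails to be irreducible, one argues directly; otherwise $\Sigma\times I$ is irreducible, which is what is used below.) Fix such an $S$ and isotope it, rel $L$, so that $S\cap A$ consists of circles and $|S\cap A|$ is minimal. Suppose $S\cap A\neq\varnothing$. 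If some component of $S\cap A$ bounds a disk in $A$, choose one, $c$, innermost in $A$, bounding a disk $E\subset A$ with $\mathrm{int}\,E\cap S=\varnothing$; then $|E\cap L|\in\{0,1,2\}$. The value $1$ is impossible: $c$ bounds a disk $F$ on $S$ disjoint from $L$, and $F\cup E$ is an embedded $2$-sphere which (by irreducibility) bounds a ball, so the closed $1$-manifold $L$ meets it an even number of times, namely $|E\cap L|$. If $|E\cap L|=0$, compress $S$ along $E$; a standard argument shows one of the resulting surfaces is again a splitting surface for $L$, now with strictly fewer intersections with $A$. If $|E\cap L|=2$, the complementary disk $\overline{A\setminus E}$ is disjoint from $L$, and compressing along an innermost disk of $S\cap A$ inside it (or along $\overline{A\setminus E}$ itself, if $c$ is the only intersection circle) again reduces $|S\cap A|$. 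Finally, if every component of $S\cap A$ is essential in $A$, then, $A$ being incompressible, innermost/outermost-disk arguments on $S$ remove these circles as well. Iterating, I may assume $S\cap A=\varnothing$.

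Now $S$ is a connected surface in $(\Sigma\times I)\setminus A=M_1\sqcup M_2$, hence $S\subset M_i$ for some $i$, say $i=1$. Since $M_1\subset\Sigma_1\times I$ and $S$ is disjoint from $L\cap M_1=L_1\cap M_1$ (and from $\alpha_1\subset X_1\times I$), $S$ is disjoint from $L_1$. The piece $M_2$ is connected and disjoint from $S$, so it lies in one component of $(\Sigma\times I)\setminus S$; hence the other component $V$ is contained in $M_1\subset\Sigma_1\times I$, it meets $L$ (so it meets $L_1$) nontrivially, and its complement in $\Sigma_1\times I$ contains $X_1\times I\supset\alpha_1$ and so also meets $L_1$. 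Since every component of $L_1$ lies entirely on one side of $S$, $S$ separates $L_1$ into two nonempty sublinks inside $\Sigma_1\times I$. If $S$ is a sphere this exhibits $(\Sigma_1,L_1)$ as split. If $S=\delta\times I$ with $\delta\subset\Sigma_1\setminus X_1$: when $\delta$ remains essential in $\Sigma_1$ it is again a splitting annulus, and when $\delta$ bounds a disk in $\Sigma_1$ (which, as $\delta$ is essential in $\Sigma$, can happen only on the side containing $X_1$) the annulus $\delta\times I$ bounds a ball in $\Sigma_1\times I$ and again splits $(\Sigma_1,L_1)$. Either way $(\Sigma_1,L_1)$ is split, contradicting the hypothesis; hence $(\Sigma,L)$ is nonsplit.

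The main obstacle, as is typical in such arguments, is the surgery bookkeeping of the second paragraph: checking that compressing $S$ along an innermost disk of $S\cap A$ always retains a component that is a genuine splitting surface for $L$, handling the components of $S\cap A$ essential in $A$, and treating the low-genus and disconnected cases of $\Sigma$ where $\Sigma\times I$ need not be irreducible (and where one should instead work directly with diagrams, as in \cite{bk20}).
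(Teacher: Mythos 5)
There is a genuine gap at the very start of your argument: your characterization of what it means for $(\Sigma,L)$ to be split. You assume a splitting is witnessed by a \emph{single} connected surface $S$ that is either a $2$-sphere or a vertical annulus $\delta\times I$ over a single separating curve $\delta\subset\Sigma$. But in a thickened surface, ``split'' means some diagram of $L$ on $\Sigma$ is disconnected, and the corresponding separating object is in general a \emph{system} of vertical annuli with several components (equivalently, a connected separating surface of higher genus). For example, on $\Sigma=T^2$ take $L_1$ and $L_2$ filling disjoint parallel essential annuli, neither isotopic into a ball: $L_1\sqcup L_2$ is split, yet every separating simple closed curve on $T^2$ bounds a disk, so no single sphere or single separating vertical annulus can separate $L_1$ from $L_2$; you need the two-component system of essential annuli. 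Your proof leans on the connectedness of $S$ at the decisive moment --- ``$S$ is a connected surface in $M_1\sqcup M_2$, hence $S\subset M_i$'' --- so it simply does not see these splittings, and the contradiction is never reached in that case. The paper's proof is built around exactly this issue: it takes a system $N$ of vertical annuli separating $L$ into two nonempty pieces, lexicographically minimizes $(|N|,|N\pitchfork A|)$, uses the irreducibility of $(\Sigma\times I)\setminus L$ (established first, via the positive-genus reduction you also make) to show every circle of $N\cap A$ is parallel in $A$ to $\gamma$ and in $N$ to $\partial N$, concludes from minimality of $|N|$ that $N$ is a single annulus parallel to $A$, and then derives the contradiction by a surgery that reduces $|N\pitchfork A|$.

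Aside from this, your overall strategy (reduce to positive genus, push the splitting surface off $A$ by innermost-disk arguments, then conclude that it splits one of the factors) is the right one and is close in spirit to the paper's. To repair the proof you would need to (i) start from a minimal system $N$ of separating vertical annuli rather than a single sphere or annulus, and (ii) replace the ``$S$ is connected, hence lies in one $M_i$'' step by an argument that controls how the several components of $N$ meet $A$ --- which is where the real work of the paper's proof lies. The surgery bookkeeping you flag at the end as ``the main obstacle'' is indeed nontrivial, but the more fundamental omission is the multi-component case itself.
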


\begin{proof}[Proof of Proposition \ref{P:CSSplit}]
Note that $\Sigma_1$ and $\Sigma_2$ are connected, so $\Sigma$ is too.  In the case where either $\Sigma_i=S^2$, the proposition follows from Fact \ref{F:Irr}. We may thus assume that both $\Sigma_i$ have positive genus. As explained in \textsection\ref{S:Thick}, it follows that  both $(\Sigma_i\times I)\setminus L_i$ are irreducible. A standard innermost circle argument then shows that $(\Sigma\times I)\setminus L$ is irreducible.  

Assume for contradiction that $(\Sigma,L)$ is split.  Then there is a system $A$ of annuli, each with one boundary component on each of $\Sigma_\pm$, such that $(\Sigma\times I)\setminus A$ has two components, both of which intersect $L$. Among all choices for such $A$, choose one which minimizes $|A|$. Then $\partial A$ is essential in $\Sigma_\pm$, because $(\Sigma\times I)\setminus L$ is irreducible, so by minimality $A$ and $\gamma\times I$ intersect only in essential circles. Therefore, each component of $A$ is isotopic rel boundary in $\Sigma\times I$ to $\gamma\times I$.  Yet, $A\cap L=\varnothing$, so $L$ can be isotoped to be disjoint from $\gamma\times I$.  This contradicts the assumption that both $(\Sigma_i\times I,L_i)$ are nonsplit.
 \end{proof}

We use Menasco's crossing ball technique to give a new proof of the following theorem of Ozawa \cite{oz06}, as the argument will adapt nicely to give a proof of Theorem \ref{T:ObviouslyPrimeEtc}. (Boden-Karimi give an alternate proof of this theorem in \cite{bk20}.)

\begin{theorem}[\cite{oz06,bk20}]\label{T:ObvSplit}
Let $D\subset \Sigma$ be a connected, cellular alternating diagram of a link $L\subset \Sigma\times I$. Then $(\Sigma,L)$ is nonsplit.
\end{theorem}

\begin{figure}
\begin{center}
\hfill\includegraphics[width=.4\textwidth]{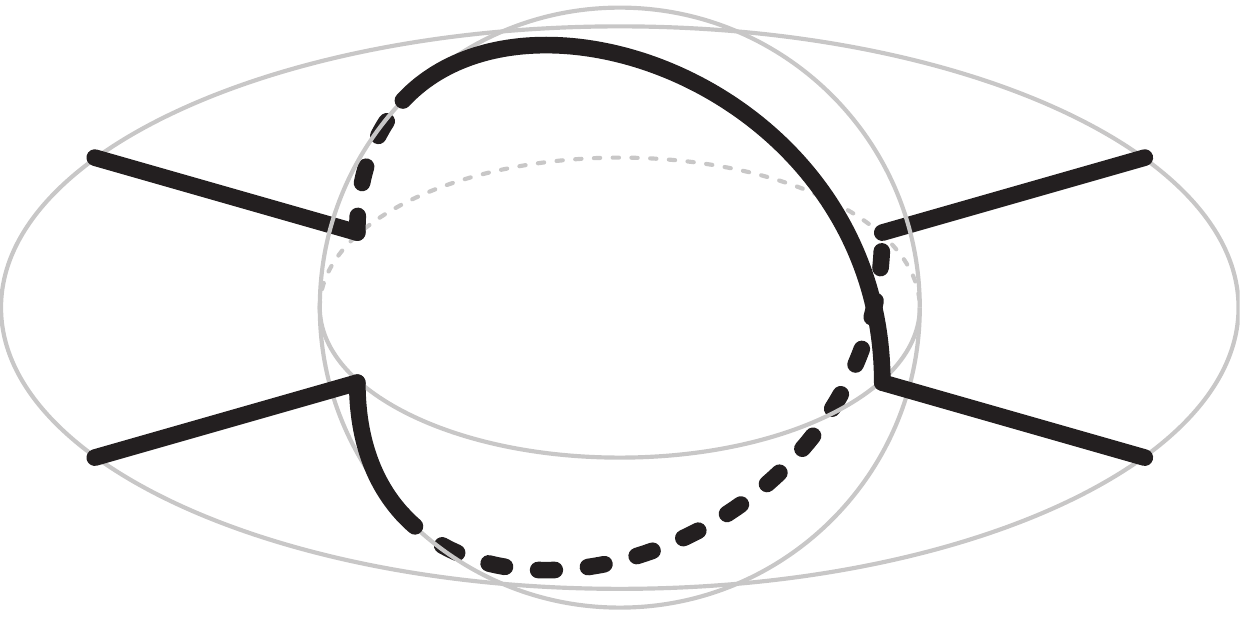}\hfill\includegraphics[width=.4\textwidth]{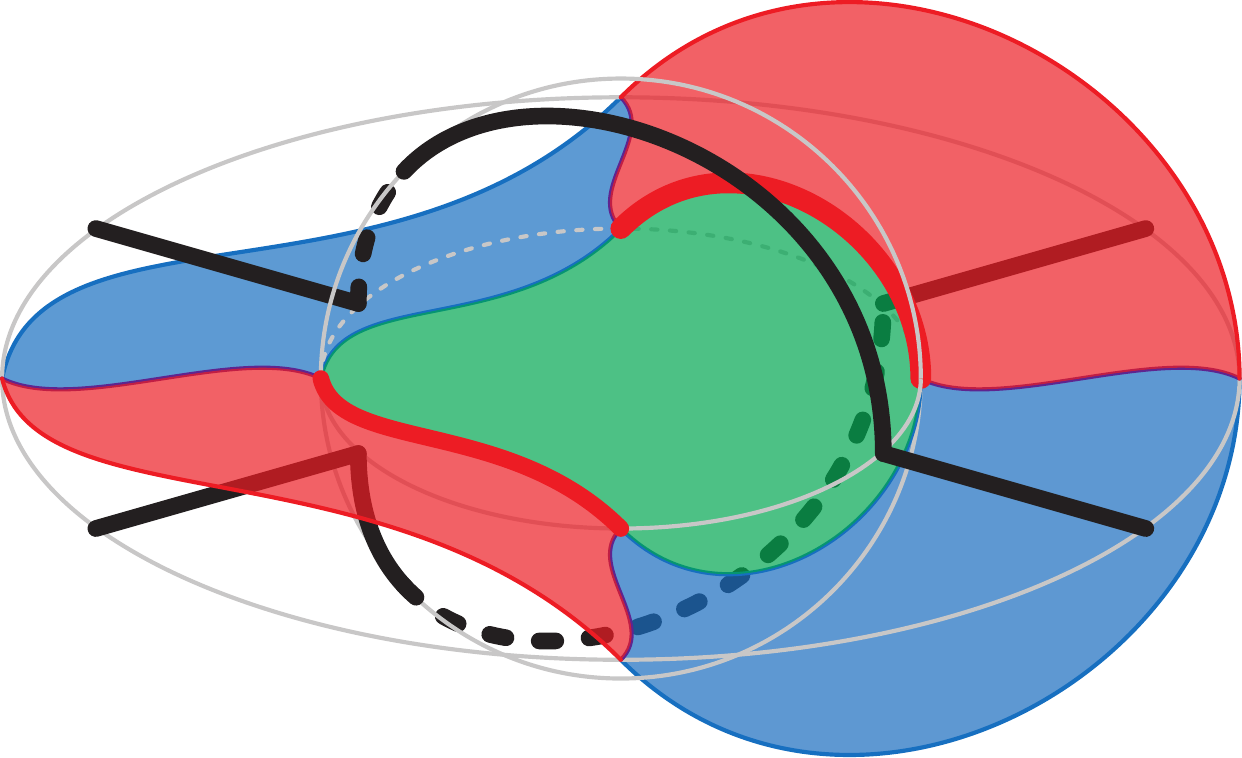}\hfill
\caption{A crossing bubble and a saddle}
\label{Fi:BubbleSaddle}
\end{center}
\end{figure}

\begin{proof}
By Proposition \ref{P:CSSplit}, it will suffice to prove this under the additional assumption that $(\Sigma,D)$ is pairwise prime. Implement the crossing ball setup a la Menasco by inserting a tiny ball $C_i$ centered at each crossing point of $D$ and pushing the two strands of $D$ near that crossing point to opposite hemispheres of that ball's boundary. Denoting $C=\bigcup_iC_i$, this gives an embedding of $L$ in $(\Sigma\setminus\text{int}(C))\cup\partial C$. See Figure \ref{Fi:BubbleSaddle}, left. Denote the two components of $(\Sigma\times I)\cut (\Sigma\cup C)$ by $H_+$ and $H_-$, and write each $\partial H_\pm\setminus \Sigma_\pm=S_\pm$.  %

Assume by way of contradiction that $(\Sigma,L)$ is split.  Then there is a system $A\subset(\Sigma\times I)\setminus L$ of disjoint, properly embedded annuli, each with one boundary component in each of $\Sigma_\pm$, which cuts $\Sigma\times I$ into two pieces, both of which intersect $L$. Of all choices for such $A$, choose one which lexicographically minimizes $\left(|A\cap C|,|A\setminus (\Sigma\cup C)|\right)$, provided $A\pitchfork \partial C$ and $A\pitchfork \Sigma\setminus \text{int}(C)$.  While fixing $A\cap (S_+\cup S_-)$, isotope $A\cut (S_+\cup S_-)$ so that all critical points of $\pi_\Sigma|_A$ lie in $A\cap (S_+\cup S_-)$.

Since $D$ is cellular, each component of $A$ must intersect $C$, and, by a standard argument, each component of $A\cap C$ is a saddle, as in Figure \ref{Fi:BubbleSaddle}, right \cite{men84}. Also, by minimality, no arc of $A\cap \Sigma\cut C$ is parallel in $\Sigma\cut C$ to $\partial C$.  Some component $V$ of $A\cut(\Sigma\cup C)$ must be a disk, due to euler characteristic considerations (for details, see the proof of Theorem \ref{T:ObviouslyPrimeEtc}).  Assume without loss of generality that $V\subset H_+$.  Denote $\gamma=\partial V\subset S_+$. Then $\gamma$ bounds a disk $U$ in $ S_+$, because $ S_+$, being isotopic to $\Sigma$, is incompressible in $\Sigma\times I$; by passing to an innermost circle of $A\cap S_+$ in $U$, we may assume that the interior of $U$ is disjoint from $A$.  

\begin{figure}
\begin{center}
\labellist
\small\hair 4pt
\pinlabel {$\boldsymbol{x_1}$} [c] at 225 120
\pinlabel {$\bs{x_2}$} [c] at 450 260
\pinlabel {$\bs{\brown{\delta_1}}$} [c] at 240 170
\pinlabel {$\bs{\brown{\delta_2}}$} [c] at 370 240
\pinlabel {$\bs{\red{\alpha}}$} [c] at 190 400
\pinlabel {$\bs{\FG{\beta}}$} [c] at 320 195
\endlabellist
\includegraphics[height=.5\textwidth]{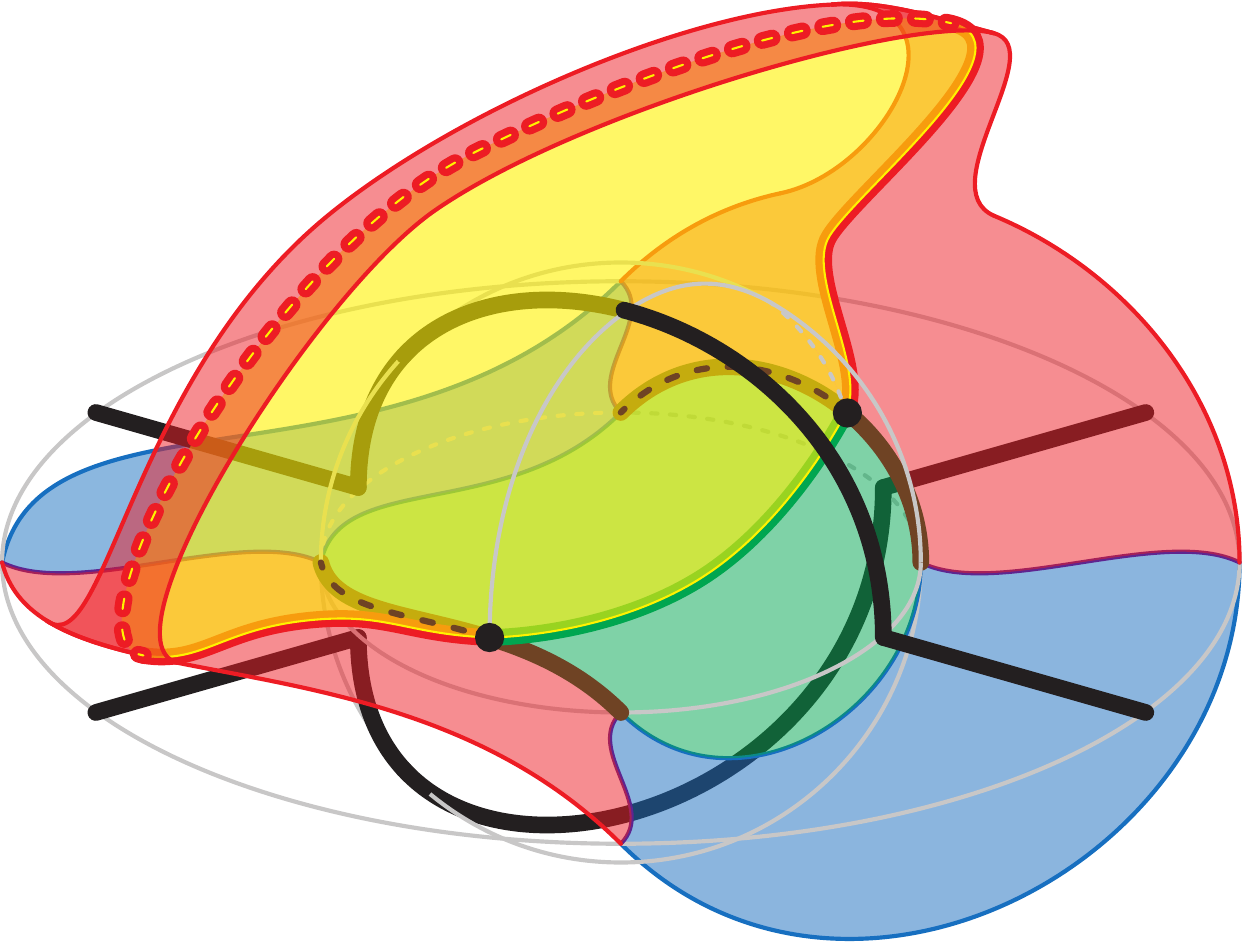}%
\caption{The arcs $\red{\delta_1},\red{\delta_2},\red{\alpha},$ and $\FG{\beta}$ and disks $\FG{S_0}$ and $X$ (yellow)  in the proofs of Theorems \ref{T:ObvSplit} and \ref{T:ObviouslyPrimeEtc}.}
\label{Fi:BubbleSaddleC}
\end{center}
\end{figure}

We claim that $\gamma$ {\it encloses} a saddle disk $S_0$ of $A\cap C$, meaning that $\pi_\Sigma(S_0)\subset \pi_\Sigma(U)$.  Such $S_0$ will yield a contradiction as follows.  The fact that $\text{int}(U)\cap A=\varnothing$ implies that both arcs $\delta_1$ and $\delta_2$ of $\partial S_0\cap S_+$ lie on $\gamma$.  Choose points $x_i\in\delta_i$, $i=1,2$, and construct arcs $\alpha\subset V$ and $\beta\subset S_i$ joining $x_1$ and $x_2$.  Then, as shown in Figure \ref{Fi:BubbleSaddleC}, $\alpha\cup\beta$ is a circle in $A$ which bounds a disk $X\subset (\Sigma\times I)\cut A$ with $|X\pitchfork L|=1$. This is impossible because $L$ is nullhomologous over $\Z/2$ in $\Sigma\times I$, hence in $(\Sigma\times I)\cut A$.

\begin{figure}
\begin{center}
\labellist
\small\hair 4pt
\pinlabel {$\red{\delta}$} [b] at 100 95
\pinlabel {$\violet{\varepsilon}$} [t] at 225 105
\pinlabel {$\red{\delta'}$} [b] at 365 68
\pinlabel {${C_2}$} [b] at 382 108
\pinlabel {${C_1}$} [b] at 82 34
\endlabellist
\includegraphics[width=.8\textwidth]{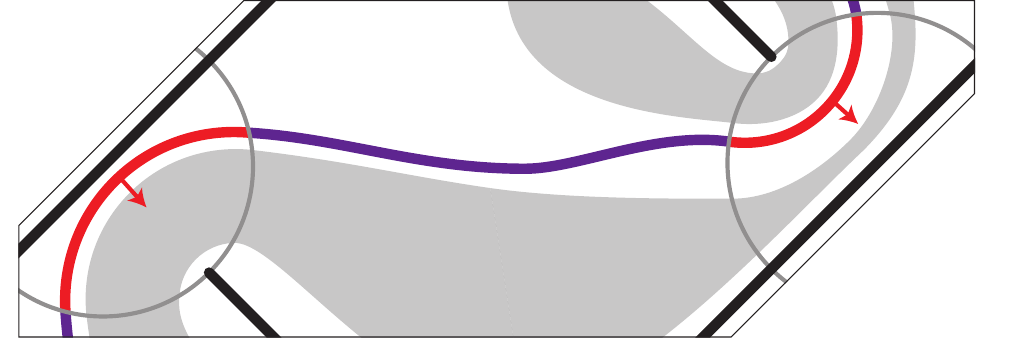}
\caption{Part of the circle $\gamma$. The arrows point into $U$. The shaded areas may contain more of $\gamma$.}
\label{Fi:SplitProp1E}
\end{center}
\end{figure}

\begin{figure}
\begin{center}
\labellist
\tiny\hair 4pt
\pinlabel {$\red{\delta'}$} [b] at 92 98
\pinlabel {$\red{\delta''}$} [b] at 123 70
\endlabellist
\raisebox{-.075\textwidth}{\includegraphics[height=.15\textwidth]{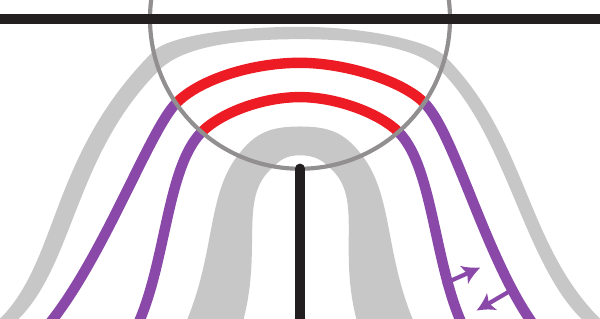}}{$~\to~$}
\raisebox{-.075\textwidth}{\includegraphics[height=.15\textwidth]{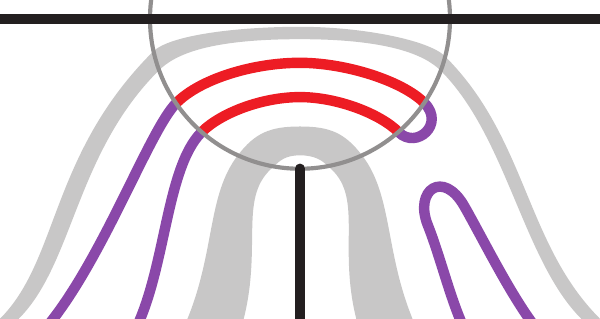}}{$~\to~$}
\raisebox{-.075\textwidth}{\includegraphics[height=.15\textwidth]{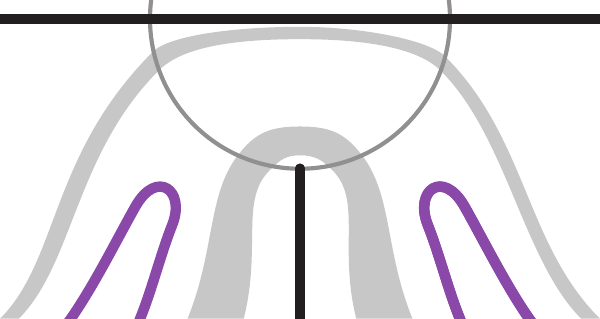}}
\caption{If $\delta'\cup\delta''\subset\gamma$ with arrows pointing into $U$, then this isotopy removes two saddles of $A\cap C$.}
\label{Fi:SplitProp1CD}
\end{center}
\end{figure}


It remains only to find such $S_0$.  Consider any crossing ball $C_1$ that $\gamma$ intersects, and take $\delta$ to be an arc of $\gamma\cap C_1$ closest to the overpass at $C_1$. If the overpass lies inside $U$, take $S_0$ to be the disk of $A\cap C_1$ incident to $\delta$.  (Recall that $A\cap \text{int}(U)=\varnothing$.) Otherwise, take $\varepsilon$ to be either arc of $\gamma\cut\partial C$ incident to $\delta$, denote the other crossing ball incident to $\varepsilon$ by $C_2$, and let $\delta'$ be the arc of $\gamma\cap \partial C_2$ incident to $\varepsilon$. See Figure \ref{Fi:SplitProp1E}. If no other arcs of $A\cap \partial C_2$ lie between $\delta'$ and the overpass at $C_2$, then this overpass lies inside $U$ (because $D$ is alternating), and we take $S_0$ to be the disk of $A\cap C_2$ incident to $\delta'$.  Otherwise, $\delta'$ is isotopic rel endpoints in $\partial C_2\cap U$ to an arc $\delta''$ of $\gamma\cap\partial C$, but as shown in  Figure \ref{Fi:SplitProp1CD}  this allows an isotopy of $A$ that decreases $|A\cap C|$, contrary to assumption. 
\end{proof}

\subsection{Prime alternating links in thickened surfaces}\label{S:PPAlt}

\begin{theorem}\label{T:ObviouslyPrimeEtc}
Let $D\subset \Sigma$ be a cellular alternating diagram of a link $L\subset \Sigma\times I$.  If $(\Sigma,D)$ is pairwise
prime, then $(\Sigma,L)$ is pairwise
prime.
\end{theorem}

 \begin{proof}[Proof of Theorem \ref{T:ObviouslyPrimeEtc}]
Implement the crossing ball setup a la Menasco as in our proof of Theorem \ref{T:ObvSplit}, and assume by way of contradiction that $(\Sigma,L)$ is not pairwise prime. Then, since $L$ is nullhomologous over $\Z/2$, there is an annulus $A$ with one boundary component in each of $\Sigma_\pm$, such that $|A\pitchfork L|=2$ and $(\Sigma \times I)\cut A$ consists of two components, neither of which is a ball whose intersection with $L$ is an unknotted arc.
Of all possibilities for such $A$, choose one which lexicographically minimizes $(|A\cap C|,|A\setminus(\Sigma\cup C)|)$, provided $A\pitchfork\partial C$ and $A\pitchfork \Sigma\setminus\text{int}(C)$.   

Then $A\cap C$ is comprised of saddles, and each $A\cap H_\pm$  is comprised of one annulus $A_\pm$ and, perhaps, some disks.  Denote $s=|A\cap C|$ and $f_\pm=|A\cap H_\pm|-1$, so that each $f_\pm$ is the number of {\it disks} of $A\cap H_\pm$.  The crossing ball structure induces a cell decomposition of $A$ with:
\begin{itemize}
\item $4s+2$ 0-cells: the four points of the ``equators" $ S_+\cap  S_-\cap \partial C$ on the boundary of each saddle, and one 
point $x_\pm$ on each of $\partial A\cap\Sigma_\pm$;
\item $6s+4$ 1-cells:
the arcs of $A\cap S_+\cap\partial C$, $A\cap  S_-\cap\partial C$, and $A\cap S_+\cap S_-$ ($2s$ of each),
the two arcs of $\partial A\setminus \{x_+,x_-\}$, and
an arc in each $A_\pm$ from $x_\pm$ to any 0-cell on $\partial A_\pm\cap S_\pm$; and
\item $s+f_++f_-+2$ 2-cells.
\end{itemize}
Hence:
\begin{align*} 0&=(4s+2)-(6s+4)+(s+f_++f_-+2)\\s&=f_++f_-.\end{align*}

Because $(\Sigma,D)$ is prime, $A$ must intersect $C$, and so $f_++f_-=s>0$.  Assume without loss of generality that $f_+>0$.  Then, arguing as in our proof of Theorem \ref{T:ObvSplit}, there is a circle $\gamma\subset A\cap  S_+$ which bounds disks $V\subset A\cap H_+$ and $U\subset  S_+\cut A$.

Any saddle disk $S_0$ of $A\cap C$ enclosed by $\gamma$ intersects $\gamma$ exactly once, or else, arguing as in our proof of Theorem \ref{T:ObvSplit}, there would be a disk $X\subset (\Sigma\times I)\cut A$ with $X\cap A=\partial X$ and $|X\pitchfork L|=1$ as shown in Figure \ref{Fi:BubbleSaddleC}, which now contradicts the minimality of $|A\cap C|$. This and the simplifying move shown in Figure \ref{Fi:SplitProp1CD} imply that no disk of $\partial C\cap S_+\cut L$ intersects $\gamma$ more than once. These two observations and the fact that $D$ is alternating imply that each arc of $\gamma\cut \partial C$ must intersect $L$. Moreover, there must be at least two arcs of $\gamma\cut\partial C$, or else $\gamma$ would appear as highlighted left in Figure \ref{Fi:PrimeSaddle}, allowing an isotopy of $A$ which decreases $|A\cap C|$, contrary to assumption. Note that the dark gray disk in the figure must contain only a crossingless arc of $D$, because $(\Sigma,D)$ is pairwise (hence locally) prime.

\begin{figure}
\begin{center}
\includegraphics[height=.3\textwidth]{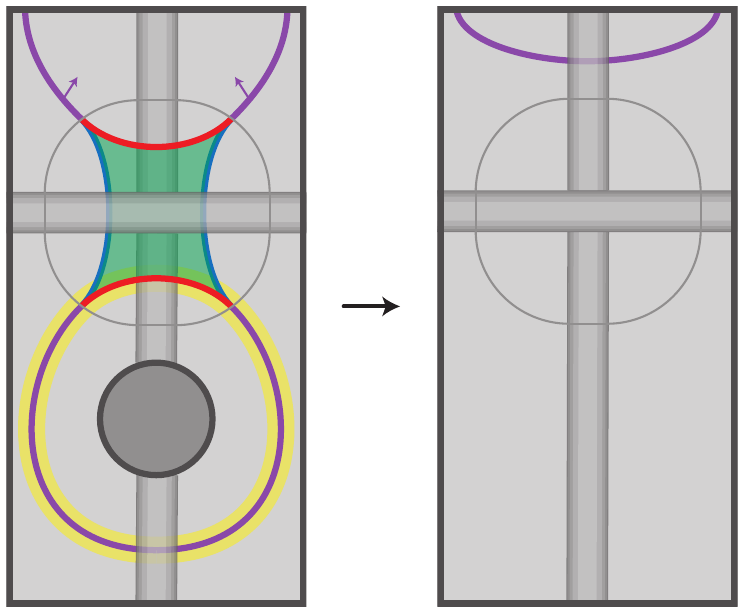}
\hfill
\includegraphics[height=.3\textwidth]{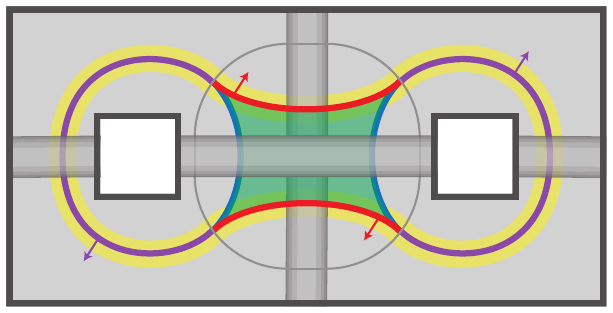}
\caption{The circle $\gamma$ cannot be as highlighted in yellow. Arrows point into $U$.}
\label{Fi:PrimeSaddle}
\end{center}
\end{figure}

Therefore, $\gamma$ contains both points of $A\cap L$.  Write $\gamma=\delta_1\cup\delta_2\cup\varepsilon_1\cup\varepsilon_2$, where each $\delta_i$ lies on the boundary of a crossing ball $C_i$ and each $\varepsilon_i\subset S_+\cut \partial C$.
In fact, $C_1\neq C_2$.  Figure \ref{Fi:PrimeSaddle}, right, shows why: if either white box in the figure is a disk in $\Sigma$, then we get  a contradiction as in Figure \ref{Fi:BubbleSaddleC}, and if not then $A\cap S_-$ contains two inessential circles, whereas only one is permitted.  Thus, $\gamma$ appears as in Figure \ref{Fi:PrimeCould}.

The preceding arguments regarding $\gamma$ apply to any innermost circle of $A\cap S_+$ that bounds a disk in $A$. Therefore, $\gamma$ is the only such circle; the circles of $(A\setminus A_+)\cap S_+$ are nested. Likewise, any innermost circle of $A\cap S_-$ that bounds a disk in $A$ would, like $\gamma$, need to contain just two arcs of $A\cap S_+\cap S_-$, each containing a point of $A\cap L$.  Yet, Figure \ref{Fi:PrimeCould} reveals that this is impossible, so $A$ intersects $H_-$ only in the annulus $A_-$. Thus, $f_-=0$ and $f_+=s\geq 2$.



\begin{figure}
\begin{center}
\labellist
\small\hair 4pt
\pinlabel {$\bs{x}$} [b] at 350 50
\pinlabel {$\bs{y}$} [b] at 485 155
\pinlabel {$\violet{\bs{\varepsilon_1}}$} [b] at 150 170
\pinlabel {$\violet{\bs{\varepsilon_2}}$} [b] at 150 50
\pinlabel {$\red{\bs{\delta_1}}$} [b] at 125 115
\pinlabel {$\red{\bs{\delta_2}}$} [b] at 385 115
\endlabellist
\includegraphics[width=.8\textwidth]{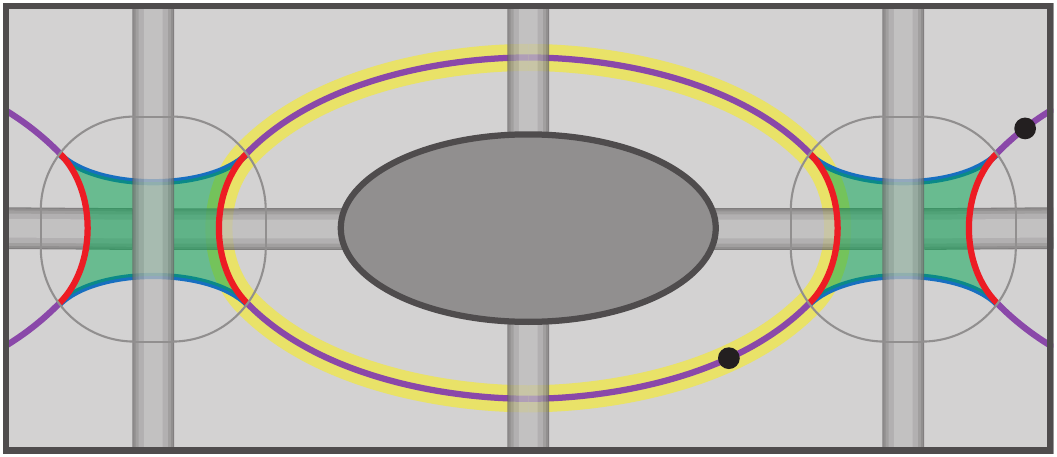}
\caption{The circle $\gamma$ (highlighted yellow) from the proof of Theorem \ref{T:ObviouslyPrimeEtc}.}
\label{Fi:PrimeCould}
\end{center}
\end{figure}

Denote the circles of $A\cap S_+$ by $\gamma_1=\gamma,\gamma_2,\hdots,\gamma_s,\gamma_{s+1}=\partial A_+\cap S_+$, such that, for each $i=1,\hdots,s$, $\gamma_i$ and $\gamma_{i+1}$ co-bound an annulus of $S_+\cut A$.
Observe that, for each $i=2,\hdots,s$, $\gamma_i$ abuts exactly four (distinct) saddles, which lie to opposite sides of $\gamma_i$: this follows by induction on $i$ as $\gamma_1$ is incident to exactly two saddles, $A\cap (H_+\cup C)$ is connected, and $D$ is alternating.  It follows that $\gamma_{s+1}$ abuts three saddles: two once and one twice. Thus, $|\gamma_1\cap C|=2$ and $|\gamma_i\cap C|=4$ for $i=2,\hdots, s+1$. Hence, there are $2s+1$ saddles: two incident to $\gamma_i$ and $\gamma_{i+1}$ for each $i=1,\hdots,s$ and one incident just to $\gamma_{s+1}$.  But the number of saddles also equals $s$, which is at least 2.  Contradiction.
%
\end{proof}

It is straightforward to adapt the preceding proof so as to recover:

\begin{theorem}[\cite{adamsetal,hp20}]\label{T:Local}
Let $D\subset \Sigma$ be a cellular alternating diagram representing a link $L\subset \Sigma\times I$.  If $(\Sigma,D)$ is locally prime, then $(\Sigma,L)$ is locally prime.
\end{theorem}

\section{Alternating virtual links}\label{S:Virtual}

\subsection{Nugatory crossings}\label{S:Nug}

The term ``nontrivial connect sum decomposition" carries quite different meanings for virtual links versus their diagrams: for virtual links, ``nontriviality" is about the types of links represented by the summands, whereas for diagrams it is about the presence of classical crossings in the summands.  In the alternating case, we will see that these notions coincide, but only after we account for ``nugatory" crossings, ones that can be removed by R1 moves 
after de-summing.  There are two types of such crossings: those that can also be removed (by R1 moves and flypes) before de-summing, and those that can be removed only after de-summing.

\begin{definition}\label{D:NugD}
A crossing $c$ in a pair $(\Sigma,D)$ is:
\begin{itemize}
\item {\bf nugatory} if there is a circle $\gamma\subset\Sigma$ with $\gamma\cap D=\{c\}$;
\item {\bf removably nugatory} if some disk $U\subset \Sigma$ has $\partial U\cap D=\{c\}$.
\end{itemize} 
\end{definition}

\begin{definition}\label{D:NugX}
A crossing $c$ in an acceptable lasso diagram $(X,V)$ is:
\begin{itemize}
\item {\bf nugatory} if some de-summing arc $\alpha\subset X$ has $\alpha\cap V=\{c\}$;
\item {\bf removably nugatory} if some disk $U\subset X$ has $\partial U\cap V=\{c\}$.
\end{itemize} 
\end{definition}

\begin{prop}\label{P:Nug}
Let $V$ be a virtual link diagram which admits an acceptable lasso $X$, let $(\Sigma,D)$ be the corresponding pair under Correspondence \ref{C:Diagrams}, let $c\in V$ be a classical crossing, and let $c'$ be the corresponding crossing of $D$. Then $c$ is (resp. removably) nugatory if and only if $c'$ is (resp. removably) nugatory.  
\end{prop}

\begin{proof}
This follows from Constructions \ref{C:LassoVtoD} and \ref{C:LassoDtoV} and Correspondence \ref{C:Diagrams}.  We give the details for one direction of each claim; the other direction is similar.

Let $Y$ be the acceptable lasso for $(\Sigma,D)$ that comes from $X$ via Construction \ref{C:LassoVtoD}, so there is a pairwise homeomorphism $f:(X,V\cap X)\to(Y,D\cap Y)$ that 
sends $c$ to $c'$. 

If $c'$ is nugatory, then there is a circle $\gamma\subset \Sigma$ with $\gamma\cap D=\{c'\}$. Isotope $\gamma$ so that $\gamma\cap Y$ is an arc.  Then $\alpha=f^{-1}(\gamma\cap Y)$ is a de-summing arc for $(X,V\cap X)$ with $\alpha\cap V=\{c\}$.
 
If $c'$ is removably nugatory, then a disk $U\subset \Sigma$ has $\partial U\cap D=\{c'\}$. Isotope $U$ so that $U\subset Y$. (This is possible because $Y$ is acceptable.) Then $U'=f^{-1}(U)$ is a disk in $X$ with $\partial U'\cap V=\{c\}$.
\end{proof}

\begin{prop}\label{P:cV}
Let $V$ be a virtual link diagram, let $V_1,V_2\in[V]$ admit acceptable lassos $X_1,X_2$, and let $c_1\in V_1$ and $c_2\in V_2$ be classical crossings that correspond to the same classical crossing $c\in V$.  The crossing $c_1$ is (resp. removably) nugatory in $(X_1,V_1\cap X_1)$ if and only if $c_2$ is (resp. removably) nugatory in $(X_2,V_2\cap X_2)$.
\end{prop}


\begin{proof}
Let $(\Sigma,D)$ be the cellular diagram associated to $V$ by Correspondence \ref{C:Diagrams}, and for $i=1,2$ let $Y_i$ be the acceptable lasso for $(\Sigma,D)$ obtained by applying Construction \ref{C:LassoVtoD} to $V_i$ and $X_i$.  Let $c'\in D$ be crossing that corresponds to $c$ (and to $c_1$ and $c_2$).  Proposition \ref{P:Nug} implies that $c'$ is (resp. removably) nugatory if and only if $c_1$ is, and likewise for $c_2$.  Thus, $c_1$ is (resp. removably) nugatory if and only if $c_2$ is.
\end{proof}

\begin{definition}\label{D:NugV}
Let $c$ be a classical crossing in a nonsplit virtual link diagram $V$.  Choose some $V'\in[V]$ which admits an acceptable lasso $X$, and let $c'\in V'$ correspond to $c$.  We call $c$:
\begin{itemize}
\item {\bf nugatory} if $c'$ is nugatory in $(X,V'\cap X)$;
\item {\bf removably nugatory} if $c'$ is removably nugatory in $(X,V'\cap X)$.\end{itemize} 
\end{definition}

Note that Definition \ref{D:NugV} relies on Propositions \ref{P:VLassoAcc} and \ref{P:cV}. 

\begin{obs}\label{O:Nug}
Suppose $V=V_1\#V_2$.  Then $V$ has nugatory crossings if and only if $V_1$ or $V_2$ does.
\end{obs}

\begin{fact}[Corollaries 1 and 2 of \cite{oz06}]\label{F:Nontrivial}
Any alternating virtual diagram with a positive number of crossings, none of them nugatory, represents a nontrivial virtual link.
\end{fact}

\subsection{Final result}\label{S:Central}


Recall from Proposition \ref{P:VLassoAcc}, whose proof is constructive, that every nonsplit class $[V']$ contains a diagram $V$ that admits an acceptable lasso $X$, and recall from \textsection\ref{S:Nug} that one can determine by inspecting $(X,V\cap X)$ whether or not $V$ has nugatory or removably nugatory crossings. Recall also from Observations \ref{O:LDPrime}-\ref{O:LDLPrime} that one can determine by inspecting $(X,V\cap X)$ whether or not $V$ is prime or locally prime.  Our central result is that, in the alternating case, one can determine by inspecting $(X,V\cap X)$ whether or not $V$ represents a prime or locally prime virtual link:

\begin{theorem}\label{T:VSplitEtc}
Let $V'$ be an alternating 
diagram of a nonsplit virtual link $K$, and consider a diagram $V\in[V']$ which 
admits an acceptable lasso $X$. Assume that $V$ has at least one virtual crossing. Then:
\begin{enumerate}[label=(\arabic*)]
\item When $V'$ has no nugatory crossings, $K$ is prime if and only if $V'$ is prime.
\item When $V'$ has no removably nugatory crossings, $K$ is locally prime if and only if $V'$ is locally prime.
\end{enumerate}
\end{theorem}

\begin{proof}
Let $(\Sigma,D)$ and $Y$ be the cellular alternating diagram and acceptable lasso obtained from $V'$ and $X$ via Construction \ref{C:LassoVtoD}, so there is a pairwise homeomorphism $f:(X,V'\cap X)\to(Y,D\cap Y)$.

For (a), assume that $V'$ has no nugatory crossings.  If $V'$ is not prime, then it decomposes as $V_1\# V_2$ where both $V_i$ are alternating with at least one crossing and no nugatory crossings. Thus, by Fact \ref{F:Nontrivial}, both $V_i$ represent nontrivial virtual links, and so $K$ is not prime.  Conversely, if $V'$ is prime, then $(X,V'\cap X)$ admits no de-summing arc, by Observation \ref{O:LDPrime}.  It follows that $(\Sigma,D)$ is pairwise prime, or else $(Y,D\cap Y)$ would admit a de-summing arc, as would $(X,V'\cap X)$. Since $(\Sigma,D)$ is pairwise prime, Theorem \ref{T:ObviouslyPrimeEtc} implies that $(\Sigma,L)$ is also pairwise prime.  Therefore, $K$ is prime, by part (2) of Theorem \ref{T:01}.
The proof of (b) is similar. 
\end{proof}

\section{Conclusion}\label{S:Conclude}

In the alternating case, one key difference between local and pairwise primeness is that a virtual knot can be uniquely identified by its locally prime factors, but not necessarily by its pairwise prime factors. 
To see why, observe that, among the infinitely man distinct connect sums of a given virtual knot and classical knot, there is only one of minimal genus, and if the two summands are alternating, then the one of minimal genus is the only one that is alternating.  
The same comment does not hold, however, for connect sums of two alternating virtual knots: 
 the virtual flyping theorem \cite{virtual} 
 implies that there are infinitely many distinct ways to take a connect sum of any two non-classical alternating virtual links, while preserving the alternating and minimal genus conditions.

In fact, questions about the proper scope of that theorem provided the initial motivation that led to this paper.  Tait's classical Flyping Conjecture (first proven by Menasco--Thistlethwaite \cite{menthis93} and later proven geometrically by the author \cite{flyping}) asserts that all prime alternating diagrams of a given link are related by so-called flype moves. Without the hypotheses of primeness, the conjecture is false: 
given a nontrivial diagrammatic connect sum $D=D_1\#D_2$ of reduced alternating link diagrams on $S^2$, one can ``slide $D_2$ along $D_1$" to change $D$ into other reduced alternating diagrams of the same link, and in general this sort of slide move cannot be achieved by flyping.  The same reasoning applies to an alternating diagram $D$ on a surface $\Sigma$ if $(\Sigma,D)$ admits a local decomposition $(\Sigma,D)=(\Sigma,D_1)\#(S^2,D_2)$. Therefore, the virtual flyping theorem is false without an assumption of local primeness.   What about links $L\subset \Sigma\times I$ that are locally prime but not pairwise prime?  The results of this paper enable the author to prove that the virtual flyping theorem holds for these links.  See \cite{virtual} for details.

\end{document}